\documentclass{amsart}
\usepackage{amsfonts}
\usepackage{amsmath}
\usepackage{amssymb}
\usepackage{amscd}

\usepackage[all,cmtip]{xy}

\newtheorem{thm}{Theorem}[section]
\newtheorem{lm}[thm]{Lemma}
\newtheorem{cor}[thm]{Corollary}
\newtheorem{prop}[thm]{Proposition}
\newtheorem{exm}[thm]{Example}
\newtheorem{prob}[thm]{Problem}

\theoremstyle{definition}
\newtheorem*{definition}{Definition}

\newcommand{\gen}{\text{\rm gen}}

\newcommand{\Z}{\mathbb{Z}}
\newcommand{\Q}{\mathbb{Q}}
\newcommand{\R}{\mathbb{R}}

\DeclareMathOperator{\Sing}{{Sing}}

\DeclareMathOperator{\Soc}{\rm Soc}

\subjclass[2010]{16D10, 16D40, 16D60, 20K40}
\keywords{groups, modules, rings, hereditarily Bassian, super Bassian, singular, primitive, Dedekind prime}

\begin{document}

\title[Hereditarily and Super Bassian Modules ...]{Hereditarily and Super Bassian Modules \\ over Certain Rings}

\author[P. V. Danchev]{Peter Vassilev Danchev}
\address{Bulgarian Academy of Sciences, Institute of Mathematics and Informatics, Sofia 1113, Bulgaria}
\email{danchev@math.bas.bg; pvdanchev@yahoo.com}
\author[T. C. Quynh]{Truong Cong Quynh}
\address{Faculty of  Mathematics, The University of Danang - University of Science and Education,  Da Nang 550000, Vietnam}
\email{tcquynh@ued.udn.vn}
\author[J. \v{Z}emli\v{c}ka]{Jan \v{Z}emli\v{c}ka}
\address{Department of Algebra, MFF UK, Sokolovsk\' a 83, 186 75 Praha 8, Czechia}
\email{zemlicka@karlin.mff.cuni.cz; jan.zemlicka@matfyz.cuni.cz}

\begin{abstract} We characterize in certain basic cases when a module over a ring is either {\it hereditarily Bassian} or {\it super Bassian} in the sense that either each its proper submodule is Bassian or, respectively, each its proper epimorphic image is Bassian. We prove several structural criteria for both hereditarily Bassian and super Bassian modules over non-primitive Dedekind prime rings, and in particular Dedekind domains. Over these rings, we establish that a singular module is super Bassian exactly when it is Bassian, which is true if and only if it is Bassian. In addition, for an arbitrary (not necessarily singular) module over a non-primitive Dedekind prime ring, the property of being super Bassian curiously implies the property of being hereditary Bassian always.

Our results somewhat continue and supply recent results due to Tuganbaev in Mathematics (2026) and Blacher in J. Algebra (2026).
\end{abstract}

\date{\today}

\maketitle

\section*{Introduction and Motivation}

Let $M$ be a module over a ring $R$.
Imitating \cite{DKTZ} (see too \cite{CDG} and \cite{DG} as original sources), we say that $M$ is {\it Bassian}, provided that the existence of a monomorphism $M \to M/N$ for some submodule $N$ of $M$ forces that $N$ is the trivial submodule of $M$, that is, $N=(0)$. More generally, if $N$ is a (possibly proper) direct summand of $M$, then $M$ is said to be {\it generalized Bassian}.

Moreover, if $L$ is a module such that each its factor-module $L/K$ is Bassian for any (proper) submodule $K\leq L$, then $L$ is called {\it super Bassian}. It is quite surprisingly that super Bassian modules are just Hopfian (see, e.g., \cite[Proposition 3.6 (3)]{DKTZ}). On the other hand, if $T$ is a module such that each its (proper) submodule is Bassian, then $T$ is called {\it hereditarily Bassian}.

As a pioneer work on the present subject, Rowen and Small explored in \cite{RS} when some large classes of (affine) algebras are Bassian proving that any semi-prime affine PI-algebra over a field is Bassian, and any semi-prime affine Goldie algebra of finite GK dimension satisfying the ACC property on semi-prime ideals is Bassian.

Likewise, recently, in \cite{DG} and \cite{K} were studied and characterized those Bassian Abelian groups whose proper subgroups are again Bassian, as well as those Bassian Abelian groups whose proper homomorphic images are Bassian too.

On the other hand, very recently, Blancher examined in \cite{B} those rings whose all proper subrings are either Noetherian or Artinian.

That is why, it is quite logical to continue the investigations in this way for other algebraic structures like modules over some classical types of rings. We, however, treat here the more global case of modules over a few large classes of rings such as (possibly non-commutative) non-primitive Dedekind prime rings and principal ideal domains. Our work is organized as follows: We distribute the main results into two subsequent sections which are devoted, respectively, to the super Bassian and hereditary Bassian cases. In the first section, we establish that the property of any module over a non-primitive Dedekind prime ring to be super Bassian is equivalent to the existence of a finitely generated submodule of the former module such that the corresponding quotient is singular and all its primary components are again finitely generated (see Theorem~\ref{super Bassian}). In the second one, we establish that the property of any module over a non-primitive Dedekind prime ring to be hereditary Bassian is tantamount to the fact that each primary component of its singular submodule is finitely generated and the existence of a finitely generated submodule that is singular modulo the given module. Moreover, each module over a semi-local ring with nilpotent Jacobson radical is hereditarily Bassian precisely when its socle is finitely generated (see Theorems~\ref{hereditarily Bassian} and \ref{artinian}, respectively).

Traditionally, $E(M)$ denotes an injective envelope of $M$. If $R$ is a domain, then $\tau(M)$ stands for the torsion part of $M$. Our basic notation and terminology are mostly standard, whereas the new notions will be specified in detail in what follows.

\section{Super Bassian Modules}

We begin here by listing once again our pivotal instrument.

\begin{definition} Let $R$ be a ring. A right $R$-module $M$ is said to be {\it super Bassian} if every epimorphic image of $M$ is Bassian.
\end{definition}

The next statement is worthy of mentioning.

\begin{exm}
Any right duo ring $R$ is super Bassian as a right $R$-module.
\end{exm}

\begin{proof}
Suppose $R$ is a right duo ring, and both $I$ and $H$ are right ideals of $R$ with $I\leq H$. Letting $f: R/I \to R/H$ be a monomorphism of right $R$-modules, we may assume that $f(1+I) = a + H$ for some $a \in R$. For all $x\in H$, since $R$ is right duo, we have $ax\in H$ and so $f(x+I)=ax+H=0$. We thus deduce that $f$ is a monomorphism and hence $x\in I$. It follows now that $I=H$. Therefore, this shows that $R_R$ is super Bassian, as stated.
\end{proof}

The following technical observation is pretty obvious, so that we can omit its proof.

\begin{lm}
Let $M$ be a right $R$-module. Then, the following conditions are equivalent:
\begin{enumerate}
\item $M$ is super Bassian.
\item each quotient of $M$ is super Bassian.
\item if the sequence $0\to M/N\to M/K$ with $N\leq K\leq M$ is exact, then $N=K$.
\end{enumerate}
\end{lm}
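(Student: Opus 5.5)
The plan is to prove the implications (1)$\Rightarrow$(3)$\Rightarrow$(2)$\Rightarrow$(1), using throughout the third isomorphism theorem: for $N\leq K\leq M$ we have $M/K\cong (M/N)/(K/N)$, and the submodules of $M/N$ are exactly the modules $K/N$ with $N\leq K\leq M$.

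For (1)$\Rightarrow$(3), suppose $M$ is super Bassian and that $f\colon M/N\to M/K$ is a monomorphism with $N\leq K\leq M$. Identifying $M/K$ with $(M/N)/(K/N)$, we view $f$ as a monomorphism $M/N\to (M/N)/(K/N)$. By definition, the epimorphic image $M/N$ of $M$ is Bassian, so $K/N=0$, that is, $N=K$.

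For (3)$\Rightarrow$(2), fix a quotient $M/N$. Every quotient of $M/N$ has the form $(M/N)/(K/N)\cong M/K$ with $N\leq K$, and every quotient of $M/K$ has the form $M/L$ with $K\leq L$. Suppose we are given a monomorphism $M/K\to (M/K)/(L/K)\cong M/L$. Applying (3) to the pair $K\leq L$ gives $K=L$, so $M/K$ is Bassian. Since every quotient of $M/N$ is Bassian, $M/N$ is super Bassian.

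For (2)$\Rightarrow$(1), take the trivial quotient $M/0\cong M$, which is super Bassian by (2); being super Bassian does not change under isomorphism, so $M$ is super Bassian. If one reads the definition as ranging only over proper quotients, I would still include $M$ itself via $K=0$ in (3), which is harmless.

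The only point needing care is the bookkeeping: the isomorphism $M/K\cong (M/N)/(K/N)$ must be used to carry the monomorphism across, and "Bassian" is isomorphism-invariant. I expect no real obstacle beyond this.
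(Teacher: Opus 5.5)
Your proof is correct. The paper omits the argument as obvious, and your cycle (1)$\Rightarrow$(3)$\Rightarrow$(2)$\Rightarrow$(1) via $M/K\cong (M/N)/(K/N)$ is exactly the routine verification intended. Your hedge about ``proper'' quotients is unnecessary, since the paper's formal definition requires every epimorphic image of $M$, including $M$ itself, to be Bassian.
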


Since direct summands are isomorphic to factors of a module, we obtain in addition as an consequence that the class of super Bassian modules is closed under taking direct summands.

A module \( M \) is called \emph{distributive}, provided $A \cap (B + C) = (A \cap B) + (A \cap C)$ holds for all submodules \( A, B, \) and \( C \) of \( M \). It is well known that if $M$ is distributive, then \( M/N \) has square-free socle for each submodule \( N \subseteq M \).

\begin{prop}
Let $M$ be a distributive right $R$-module. Then, the following two conditions are equivalent:
\begin{enumerate}
\item $M$ is super Bassian.
\item Every factor module of $M$ is generalized Bassian.
\end{enumerate}
\end{prop}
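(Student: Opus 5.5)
The implication (1)$\Rightarrow$(2) is immediate. Every Bassian module is generalized Bassian, because the zero submodule is a direct summand. So if every factor of $M$ is Bassian, every factor of $M$ is generalized Bassian. The substance lies in (2)$\Rightarrow$(1), and there I plan to combine the direct summand provided by the generalized Bassian hypothesis with a rigidity property of distributive modules.

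First I record two facts.

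\emph{Factors stay distributive.} For $N\le M$ the submodule lattice of $X=M/N$ is isomorphic to the interval $[N,M]$ in the lattice of submodules of $M$. Distributivity is an identity of lattices and is inherited by sublattices, so $X$ is again distributive.

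\emph{No maps between complementary summands.} If $X$ is distributive and $X=K\oplus L$, then $\mathrm{Hom}_R(K,L)=0$. To prove this, take $g\colon K\to L$ and form its graph $A=\{k+g(k):k\in K\}\le X$. Since $K+L=X$, distributivity gives
\[
A=A\cap(K+L)=(A\cap K)+(A\cap L).
\]
We have $A\cap L=0$: if $k+g(k)\in L$, then $k\in K\cap L=0$. Moreover $A\cap K=\{k+g(k): g(k)=0\}=\ker g$. Hence $A=\ker g\subseteq K$. So every $k+g(k)$ lies in $K$, which forces $g(k)\in K\cap L=0$, that is, $g=0$.

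Now assume (2). Let $X=M/N$ be any factor of $M$, let $K\le X$, and let $f\colon X\to X/K$ be a monomorphism. It suffices to show $K=0$.

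\begin{enumerate}
\item Since $X$ is generalized Bassian by hypothesis, $K$ is a direct summand: $X=K\oplus L$ for some $L$.
\item Then $X/K\cong L$. Composing $f$ with this isomorphism gives a monomorphism $h\colon X\to L$.
\item The restriction $h|_K\colon K\to L$ is injective.
\item By the first fact $X$ is distributive, so the second fact gives $h|_K=0$.
\item An injective zero map has zero domain, so $K=0$.
\end{enumerate}

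Thus every factor of $M$ is Bassian, and $M$ is super Bassian.

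The only step with real content is the vanishing of $\mathrm{Hom}_R(K,L)$ for complementary summands of a distributive module, handled by the graph argument above. Everything else is bookkeeping.
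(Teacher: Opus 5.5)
Your proof is correct, but for (2)$\Rightarrow$(1) it takes a different route from the paper.

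The paper does not use the direct-summand conclusion directly. It invokes \cite[Proposition 4.4]{DKTZ}: if the generalized Bassian module $M/I$ embeds into $(M/I)/(K/I)$, then $K/I$ is semisimple and $(K/I)^{(\omega)}$ embeds into $M/I$. It then quotes, as well known, that every factor of a distributive module has square-free socle, and this forces $K/I=0$.

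You use only the defining property of generalized Bassian modules, namely that $K$ is a summand, $X=K\oplus L$. The monomorphism $X\to X/K\cong L$ then restricts to a monomorphism $K\to L$. You kill it with the graph argument showing $\mathrm{Hom}_R(K,L)=0$ for complementary summands of a distributive module, together with your check that factors of distributive modules are again distributive.

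What each route buys:
\begin{itemize}
\item Your version is self-contained. It needs neither the external structural result nor the square-free-socle fact.
\item Your argument also works whenever no nonzero direct summand of a factor of $M$ embeds into a complement. For example, this holds when every factor of $M$ is indecomposable, as for a cyclic module over a local ring such as $k[x,y]/(x,y)^2$, which is not distributive.
\item The paper's route shows that square-freeness of the socles of factors is the feature of distributivity being used. It also exhibits the typical obstruction in a generalized Bassian module: a semisimple kernel with $\omega$ copies of it embedded. The cost is reliance on \cite[Proposition 4.4]{DKTZ}.
\end{itemize}
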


\begin{proof}
(1) $\Rightarrow$ (2). It is obvious, because every Bassian module is generalized Bassian.	
	
(2) $\Rightarrow$ (1). Assume that every factor module of $M$ is generalized Bassian. Furthermore, let $K$ and $I$ be submodules of $M$ with $I\leq K$ and $\alpha: M/I\to M/K\cong (M/I)/(K/I)$ be a monomorphism. Since $M/I$ is generalized Bassian, $K/I$ is semi-simple and $(K/I)^{(\omega)}$ is embeddable into $M/I$ by
\cite[Proposition 4.4]{DKTZ}. But, as noticed above, $M/I$ has square-free socle, we infer that $K/I=(\bar{0})$ whence $K=I$. We, thereby, conclude that $M$ is super Bassian, as wanted.
\end{proof}

Since an infinite direct power of a nonzero module is not Bassian, we can formulate the following easy observation.

\begin{lm}\label{super} A module $M$ over a ring $R$ is not super Bassian if there exists $N\le M$ and a non-zero module $S$ such that $M/N\cong S^{(\omega)}$.
\end{lm}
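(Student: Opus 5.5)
The plan is to show directly that the quotient $M/N\cong S^{(\omega)}$ is not Bassian. Since $M/N$ is an epimorphic image of $M$, this already contradicts the definition of super Bassian. Because being Bassian is invariant under isomorphism, it suffices to show that $S^{(\omega)}$ itself fails to be Bassian.

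For this I will write $S^{(\omega)}=\bigoplus_{n\ge 0}S_n$ with every $S_n= S$. Take the submodule $K=S_0$, the zeroth copy. It is non-zero because $S\neq 0$. The quotient is
\[
S^{(\omega)}/K\cong\bigoplus_{n\ge1}S_n\cong S^{(\omega)} .
\]
The last isomorphism is the shift $(s_0,s_1,s_2,\dots)\mapsto(0,s_0,s_1,\dots)$, re-indexed. Composing gives an isomorphism, in particular a monomorphism, $S^{(\omega)}\to S^{(\omega)}/K$ with $K\neq 0$. By the definition of Bassian from the Introduction, $S^{(\omega)}$ is therefore not Bassian.

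Transporting this back along $M/N\cong S^{(\omega)}$, the module $M/N$ has a non-zero submodule $K'$ with $M/N\hookrightarrow (M/N)/K'$. Thus $M/N$ is a non-Bassian epimorphic image of $M$, and $M$ is not super Bassian.

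Equivalently, in the language of condition (3) of the preceding lemma, let $K''$ be the preimage of $K'$ in $M$. Then $N<K''$, and the sequence $0\to M/N\to M/K''$ is exact, which violates that condition.

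There is no real obstacle here. The only point needing care is to use the infinitely many coordinates to get the shift isomorphism, so that the non-zero kernel $K$ is genuinely swallowed.
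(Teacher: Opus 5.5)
Your proof is correct and follows the paper's route: the paper simply notes that an infinite direct power $S^{(\omega)}$ of a non-zero module is not Bassian, and that $M/N$ is an epimorphic image of $M$. Your shift isomorphism $S^{(\omega)}\cong S^{(\omega)}/S_0$ spells out the standard justification of that fact, which the paper leaves implicit.
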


The next claim is also helpful for further applicable purposes and so worthy of documenting.

\begin{lm}\label{perfect} Let $M$ be an infinitely generated module over a right perfect ring. Then,
\begin{enumerate}
\item there exists a simple module $S$ and a submodule $N$ such that $M/N\cong S^{(\omega)}$;
\item M is not super Bassian.
\end{enumerate}
\end{lm}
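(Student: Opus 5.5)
Part (2) follows at once from part (1) together with Lemma~\ref{super}: a module admitting a factor isomorphic to $S^{(\omega)}$ with $S\neq 0$ is not super Bassian. So the whole task is to prove (1). The plan is to pass to the top of $M$ modulo the radical and then use the structure of semisimple modules.

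Let $J=J(R)$. Since $R$ is right perfect, $R/J$ is semisimple Artinian and $J$ is right T-nilpotent. Bass's theorem then gives $\operatorname{Rad}(M)=MJ$ and $MJ\neq M$ for every nonzero module. More precisely, $MJ$ is small in $M$. I would justify this via Bass's characterization: over a right perfect ring every module has a projective cover, and the radical of any module is superfluous, since $MJ+L=M$ forces $M/L=(M/L)J$, whence $M/L=0$.

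The key step is the following claim: if $M$ is infinitely generated, then $M/MJ$ is infinitely generated. Suppose instead that $M/MJ$ is generated by the images of $m_1,\dots,m_k$. Put $L=\sum m_iR$. Then $L+MJ=M$, and since $MJ$ is small in $M$, this gives $L=M$, so $M$ is finitely generated, a contradiction. This smallness argument is the only place where perfectness is really used, and I expect it to be the main point needing care. The reason is that for arbitrary, not finitely generated modules, Nakayama's lemma fails in general, and we need the T-nilpotence of $J$ to apply it here.

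Now $M/MJ$ is an $R/J$-module, hence semisimple. Since it is infinitely generated, it is a direct sum of infinitely many simple modules. Because $R/J$ is semisimple Artinian, there are only finitely many isomorphism types of simple modules. By the pigeonhole principle, some simple $S$ therefore occurs infinitely often, so $M/MJ\cong S^{(\omega)}\oplus C$ for some complement $C$.

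Finally, take $N\supseteq MJ$ to be the full preimage of $C$ in $M$. Then
\[M/N\cong (M/MJ)/C\cong S^{(\omega)},\]
which proves (1). Part (2) then follows from Lemma~\ref{super}, as noted at the start.
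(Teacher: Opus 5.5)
Your proof is correct and follows the same route as the paper: $MJ$ is superfluous in $M$ over a right perfect ring, so $M/MJ$ is an infinitely generated semisimple $R/J$-module; finiteness of the number of simple isomorphism types then yields a factor $S^{(\omega)}$, and (2) follows from Lemma~\ref{super}. You simply spell out details the paper leaves implicit, namely why $MJ$ is small, the pigeonhole step, and taking the preimage $N$ of the complement $C$.
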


\begin{proof} (1) Let $R$ be a right perfect ring with Jacobson radical $J$, and let $M$ be an infinitely generated module. Since $MJ$ is superfluous in $M$, the factor $M/(MJ)$ has the structure of an infinitely generated module over the semi-simple ring $R/J$, thus it is an infinitely generated semi-simple module. Notice that there exists up to an isomorphism only finitely many simple modules over each perfect ring, hence there exists a simple module $S$ and a submodule $N$ such that $MJ\le N\le M$ and
$M/N\cong S^{(\omega)}$.

(2) $M$ is not super Bassian according to (1) and Lemma~\ref{super}.
\end{proof}

Recall that a module $M$ is called {\it q.f.d.} if each quotient of $M$ has finite Goldie dimension.
As a consequence of the previous assertion, we derive:

\begin{prop} Let $M$ be a right module over a right Artinian ring $R$. Then, the following three conditions are equivalent:
\begin{enumerate}
\item $M$ is super Bassian.
\item $M$ is finitely generated.
\item $M$ is a q.f.d. module.
\end{enumerate}	
\end{prop}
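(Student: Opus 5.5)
We will show the cycle (1)$\Rightarrow$(2)$\Rightarrow$(3)$\Rightarrow$(1).

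For (1)$\Rightarrow$(2), note that a right Artinian ring is right perfect. Hence, if $M$ were infinitely generated, Lemma~\ref{perfect}(2) would say that $M$ is not super Bassian. So a super Bassian $M$ must be finitely generated.

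For (2)$\Rightarrow$(3), a finitely generated module over a right Artinian ring is both Artinian and Noetherian, since $R_R$ itself is Artinian and Noetherian by Hopkins--Levitzki. The same holds for every quotient $M/N$. A Noetherian (or Artinian) module has finite Goldie dimension, because an infinite direct sum of nonzero submodules gives a strictly ascending chain. Thus every quotient of $M$ has finite Goldie dimension, i.e.\ $M$ is q.f.d.

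(3)$\Rightarrow$(1) is the step with real content, and it splits into two parts.

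\textbf{Reduction to finite generation.} Suppose $M$ is q.f.d.\ but infinitely generated. Lemma~\ref{perfect}(1) then gives a quotient $M/N\cong S^{(\omega)}$ with $S$ simple, which has infinite Goldie dimension, a contradiction. So $M$ is finitely generated, hence of finite length.

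\textbf{Finite length forces super Bassian.} By the earlier lemma characterizing super Bassian modules, it suffices to show: if $N\le K\le M$ and $M/N$ embeds into $M/K$, then $N=K$. Both quotients have finite composition length. An embedding gives $\ell(M/N)\le \ell(M/K)=\ell(M/N)-\ell(K/N)$, so $\ell(K/N)=0$ and therefore $K=N$.

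I expect no serious obstacle. The only point to verify is that Lemma~\ref{perfect}(1) applies here, which needs a right Artinian ring to be right perfect; this is standard, since $J$ is nilpotent and $R/J$ is semisimple.
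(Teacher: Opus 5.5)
Your proof is correct and is essentially the paper's argument. Both (1)$\Rightarrow$(2) and the q.f.d.\ $\Rightarrow$ finitely generated step go through Lemma~\ref{perfect}, using that a right Artinian ring is right perfect, and the remaining implications rest on finitely generated modules over a right Artinian ring having finite length. The only difference is that you write out the length inequality $\ell(M/N)\le \ell(M/N)-\ell(K/N)$ showing that finite-length modules are super Bassian, whereas the paper simply asserts this.
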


\begin{proof} (1) $\Rightarrow$ (2). It follows from Lemma~\ref{perfect}(2)

(2) $\Rightarrow$ (1) and (3). Since a finitely generated module over an Artinian ring is necessarily of finite length, it is a super Bassian q.f.d. module.
	
(3) $\Rightarrow$ (2). Assume that $M$ is an infinitely generated module. Then by
Lemma~\ref{perfect}(1), there exist a simple module \( S \) and a submodule \( N \) of $M$ such that \(M/N \cong S^{(\omega)}\), which is a contradiction to the finite Goldie dimension of $M/N$, as expected.
\end{proof}

We now intend to look at the more complicated situation of when a module over a right Noetherian ring is super Bassian by establishing the following (see \cite[Example 2.1]{DKTZ}).

\begin{lm} Any finitely generated module over a right Noetherian ring is super Bassian.
\end{lm}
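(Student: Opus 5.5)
By the equivalence of (1) and (3) in the preceding technical lemma, it suffices to show the following. If $M$ is a finitely generated right module over a right Noetherian ring $R$, $N\le K\le M$, and $f\colon M/N\to M/K$ is a monomorphism, then $N=K$. Since quotients of finitely generated modules are finitely generated, we may replace $M$ by $M/N$ and assume $N=0$. The claim then becomes: a finitely generated module $M$ over a right Noetherian ring is Bassian. In other words, any monomorphism $f\colon M\to M/K$ forces $K=0$.

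The key tool is that $M$ is a Noetherian module, being finitely generated over a right Noetherian ring, so every ascending chain of submodules stabilizes. We build such a chain by iterating $f$. Let $\pi\colon M\to M/K$ be the canonical projection, and define $K_0=0$ and $K_{n+1}=\pi^{-1}(f(K_n))$.

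\begin{itemize}
\item \emph{Inductive claims.} We prove by induction that $K_n\le K_{n+1}$ and $f(K_n)\supseteq$ the image of $K_{n-1}$ corresponding appropriately. The cleaner form of the statement is: $f$ induces an isomorphism $K_{n+1}/K\cong K_n$ (or rather $K_{n+1}/K= f(K_n)$) and $K\le K_{n+1}$. Concretely, $K_1=\pi^{-1}(0)=K\ge K_0$. If $K_{n-1}\le K_n$, then $f(K_{n-1})\le f(K_n)$, hence $K_n\le K_{n+1}$.
\item \emph{Stabilization.} By Noetherianity there is $n$ with $K_n=K_{n+1}$.
\item \emph{Descent.} Since $\pi^{-1}$ preserves strict inclusions between submodules of $M/K$, equality $K_n=K_{n+1}$ gives $f(K_{n-1})=f(K_n)$. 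Injectivity of $f$ then gives $K_{n-1}=K_n$. Descending inductively yields $K_0=K_1$, that is, $K=0$.
\end{itemize}

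The only delicate step is the bookkeeping in this descent: one must check that $\pi^{-1}$ is injective on the lattice of submodules of $M/K$, which holds because $\pi$ is surjective, and that $f$ preserves strict inclusions, which holds because $f$ is injective. Alternatively, one can simply cite \cite[Example 2.1]{DKTZ}, which states that Noetherian modules are Bassian, and combine it with the reduction to the case $N=0$ made at the start.
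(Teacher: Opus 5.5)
Your proof is correct and follows the paper's route: each quotient of $M$ is finitely generated, hence Noetherian, and Noetherian modules are Bassian. The paper simply cites that last fact, whereas you also supply the standard chain argument with $K_{n+1}=\pi^{-1}(f(K_n))$, which is valid; the muddled first sentence of your ``inductive claims'' item can be dropped, since only the monotonicity $K_n\le K_{n+1}$ is actually used.
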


\begin{proof} Recall that Noetherian module is Bassian. Since finitely generated module over a right Noetherian ring is Noetherian, it is super Bassian, as asked.
\end{proof}

We are now prepared to prove the following assertion.

\begin{lm} Let $\mathcal M$ be the set of all maximal ideals of a non semi-simple abelian regular ring $R$, and let $M$ be an $R$-module. If $M$ is super Bassian, then $$\gen(M)\le |\mathcal M|\le 2^{|R|}.$$
\end{lm}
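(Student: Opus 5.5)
The plan is to bound $\gen(M)$ by producing a generating set of $M$ that is assembled from finitely many elements for each maximal ideal. Lemma~\ref{super} supplies the finiteness at each maximal ideal, and a local–global argument that uses the central idempotents of the abelian regular ring $R$ shows that these elements generate all of $M$. The second inequality $|\mathcal M|\le 2^{|R|}$ is a separate counting argument.

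First I will settle two preliminary facts about $\mathcal M$.
\begin{enumerate}
\item $\mathcal M$ is infinite. If $R$ had only finitely many maximal ideals, then, since $R$ is regular with zero Jacobson radical, the Chinese remainder theorem would make $R$ a finite product of the rings $R/\mathfrak m$. Each $R/\mathfrak m$ is an abelian regular ring with no nontrivial ideals, hence a division ring, so $R$ would be semisimple, which is excluded by hypothesis.
\item $|\mathcal M|\le 2^{|R|}$. Since all idempotents of $R$ are central, every maximal ideal $\mathfrak m$ is generated by the idempotents it contains. Indeed, for $a\in\mathfrak m$ we have $aR=eR$ for an idempotent $e$, and $e\in\mathfrak m$. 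Hence $\mathfrak m$ is determined by a subset of $R$, and there are at most $2^{|R|}$ such subsets.
\end{enumerate}

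Next, fix $\mathfrak m\in\mathcal M$. Because $R$ is abelian regular, $R/\mathfrak m$ is a division ring and $S_{\mathfrak m}=R/\mathfrak m$ is a simple module. The quotient $M/M\mathfrak m$ is a vector space over $R/\mathfrak m$, that is, a direct sum of copies of $S_{\mathfrak m}$. If it were infinite-dimensional, then $M$ would have a quotient isomorphic to $S_{\mathfrak m}^{(\omega)}$, contradicting Lemma~\ref{super}. So $M/M\mathfrak m$ is finite-dimensional, and I choose a finite set $X_{\mathfrak m}\subseteq M$ whose image spans it. Put $X=\bigcup_{\mathfrak m}X_{\mathfrak m}$ and let $N$ be the submodule generated by $X$. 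Since $\mathcal M$ is infinite, $|X|\le \aleph_0\cdot|\mathcal M|=|\mathcal M|$. It therefore suffices to show $N=M$.

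By construction $N+M\mathfrak m=M$ for every $\mathfrak m$, so $Q=M/N$ satisfies $Q=Q\mathfrak m$ for all $\mathfrak m\in\mathcal M$. Take $x\in Q$ and write $x=\sum y_ia_i$ with $a_i\in\mathfrak m$. The finitely generated right ideal $\sum a_iR$ equals $eR$ for an idempotent $e$, and $e\in\mathfrak m$. Each $a_i=ea_i$, and $e$ is central, so $a_i=a_ie$. It follows that $xe=x$, that is, $x(1-e)=0$ with $1-e\notin\mathfrak m$. Thus the annihilator of $x$ lies in no maximal ideal, so it is all of $R$ and $x=0$. Hence $Q=0$ and $M=N$, which gives $\gen(M)\le|\mathcal M|$.

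I expect the local–global step $Q=Q\mathfrak m$ for all $\mathfrak m\Rightarrow Q=0$ to be the main obstacle. It is exactly where the abelian hypothesis is needed: without centrality of idempotents one cannot pass from $x\in Q\mathfrak m$ to $x(1-e)=0$.
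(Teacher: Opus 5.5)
Your proof is correct and follows the paper's argument: Lemma~\ref{super} makes each $M/M\mathfrak m$ finitely generated, you choose finitely many generators for each maximal ideal, and you conclude with the count $|\omega\times\mathcal M|=|\mathcal M|$. The only difference is that the paper simply cites $J(M)=(0)$ to get $M=\sum_{I\in\mathcal M}F_I$, whereas you prove this local--global step directly with central idempotents; that step implicitly uses the fact that one-sided ideals of an abelian regular ring are two-sided, so a proper annihilator is contained in some $\mathfrak m\in\mathcal M$.
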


\begin{proof} Observe first that $\mathcal M$ is an infinite set, because $R$ is not semi-simple. Given $I\in \mathcal M$, then $M/MI$ is finitely generated invoking Lemma~\ref{super}, so we may fix a finitely generated submodule $F_I$ such that $M=F_I+MI$. Therefore, one writes that $M=\sum_{I\in \mathcal{M}}~ F_I$ since $J(M)=(0)$. Thus, it automatically follows that $$\gen(M)\le |\omega\times\mathcal M|=|\mathcal M|\le 2^{|R|},$$ as requested.
\end{proof}

The following statement could be useful.

\begin{prop} If $M$ is a module which is not Bassian, then it contains a countable generated submodule which is not Hopfian.
\end{prop}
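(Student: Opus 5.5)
The plan is to turn the failure of the Bassian property into a surjective endomorphism with nonzero kernel, and then cut it down to a countably generated piece by taking a chain of successive preimages. Since $M$ is not Bassian, there is a submodule $0\neq N\le M$ and a monomorphism $f\colon M\to M/N$. Let $\pi\colon M\to M/N$ be the canonical projection.

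\emph{Step 1: a partial endomorphism.} Put $M'=\pi^{-1}(f(M))$, a submodule of $M$ containing $N$. Define
\[
g\colon M'\to M,\qquad g(y)=f^{-1}(\pi(y)).
\]
This is well defined and $R$-linear because $f$ is injective and $\pi(y)\in f(M)$ for $y\in M'$. We have $\ker g=N$, since $g(y)=0$ iff $\pi(y)=0$. Moreover $g$ is onto $M$: given $m\in M$, surjectivity of $\pi$ gives $y\in M$ with $\pi(y)=f(m)$, and then $y\in M'$ and $g(y)=m$.

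\emph{Step 2: a preimage chain.} Fix $0\neq y_0\in N\subseteq M'$. Because $g\colon M'\to M$ is surjective, we can inductively choose $y_{n+1}\in M'$ with $g(y_{n+1})=y_n$ for every $n\ge 0$. The point of using only preimages is that every $y_n$ automatically lies in $M'$, the domain of $g$. This avoids the real obstacle, namely that $\pi$ need not land inside $f(M)$, so closing a submodule under $g$ itself could leave $M'$.

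\emph{Step 3: conclusion.} Let $C=\sum_{n\ge0}y_nR\le M'$, which is countably generated. Then $g(C)=\sum_n g(y_n)R=0+\sum_{n\ge0}y_nR=C$, so $g|_C$ is a surjective endomorphism of $C$. It is not injective, because $g(y_0)=0$ while $y_0\neq0$. Hence $C$ is a countably generated submodule of $M$ which is not Hopfian. The only delicate point is the choice of the domain $M'$ in Step 1, which is what makes $g$ both well defined and surjective; everything after that is routine.
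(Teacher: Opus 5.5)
Your proof is correct and is essentially the paper's argument. The paper picks a nonzero $x_0\in N$ and lifts $x_{n+1}+N=\iota(x_n)$, which is exactly your condition $g(y_{n+1})=y_n$; it then shows $\iota(X)=(X+N)/N$, so that $X\cong X/(X\cap N)$ with $X\cap N\neq 0$, and your $g|_C$ is precisely the projection $X\to X/(X\cap N)$ composed with that isomorphism, written out explicitly as a surjective, non-injective endomorphism.
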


\begin{proof} Let $M$ be non-Bassian and let $N\le M$ be a non-zero submodule such that $\iota:M\to M/N$ is an embedding. Let $0\ne x\in N$ and define a sequence of elements $x_n\in M$ inductively by the rule $x_0=x$ and $x_{n+1}+N=\iota(x_n)$. Put $X:=\sum_{n\ge 0}x_nR$ and note that
\[
\iota(X)=\sum_{n\ge 0}\iota(x_n)R =\sum_{n\ge 1} (x_{n}R+N)/N = \sum_{n\ge 0} (x_{n}R+N)/N = (X+N)/N,
\]
because $x_0\in N$. Thus, $\iota$ injectively maps $X$ onto $(X+N)/N\cong X/(X\cap N)$ and $x_0\in X\cap N\ne (0)$. We thereby have proved that $X$ is countably generated non-Hopfian submodule.
\end{proof}

In the rest of the section, we describe super Bassian modules over non-commutative generalization of Dedekind domains. To that end, we need to recall some useful notions from \cite{T}.

Let $M$ be a module. We denote by $\Sing(M)$ the {\it singular submodule} of $M$ and we say that $M$ is {\it non-singular} if $\Sing(M)=(0)$, it is {\it singular} if $\Sing(M)=M$, and it is {\it Goldie-radical} if $M/\Sing(M)=\Sing(M/\Sing(M))$. The abbreviation $B\trianglelefteq M$ means that $B$ is an {\it essential submodule} of $M$ and a {\it HNP} ring means a {\it hereditary Noetherian prime} ring. Likewise, a {\it non-primitive Dedekind prime} ring is HNP with the classical ring of fractions $Q$ such that it is {\it not} primitive and every non-zero ideal is invertible in $Q$. For some basic properties of these notions and other non-explained terminology, we refer to \cite{T}.

\medskip

We now formulate and prove the following useful technicality.

\begin{lm}\label{sing} Let $R$ be a right non-singular ring, and $A$, $B$ submodules of a module $M$. Then,
\begin{enumerate}
\item[(1)] if $B\trianglelefteq M$, then $M/B$ is singular.
\item[(2)] if $A\trianglelefteq B$ and $M/B$ is singular, then $M/A$ is singular.
\item[(3)] the following is equivalent:
\begin{enumerate}
\item[(a)] $M/B$ is singular.
\item[(b)] $B+\Sing(M)\trianglelefteq M$.
\item[(c)] there exists a submodule $H$ of $B$ such that $H\cap\Sing(M)=(0)$ and $M/H$ is singular.
 \end{enumerate}
 \end{enumerate}
\end{lm}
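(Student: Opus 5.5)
The proof uses the standard description of singular modules over a right non-singular ring. A module $X$ is singular iff every element of $X$ has an essential right annihilator in $R$. Equivalently, for every $x\in X$ the right ideal $(0:x)=\{r\in R : xr=0\}$ is essential in $R_R$. Over a right non-singular ring, $\Sing$ is moreover a radical-like torsion theory (the Goldie torsion theory, with $\Sing(M/\Sing(M))$ possibly nonzero in general). However, the facts below only use that singular modules are closed under submodules, quotients and extensions. Closure under extensions is exactly where right non-singularity of $R$ is needed.

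For (1), take $m\in M$ and consider $(B:m)=\{r\in R: mr\in B\}$. This is essential in $R_R$. Indeed, for $0\ne r\in R$, either $mr=0$, so $r\in(B:m)$, or $mr\ne0$, in which case essentiality of $B$ gives $s$ with $0\ne mrs\in B$, hence $0\ne rs\in(B:m)$. So $m+B$ has essential annihilator and $M/B$ is singular. This part does not even need the hypothesis on $R$.

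For (2), consider the exact sequence $0\to B/A\to M/A\to M/B\to0$. By (1) applied inside $B$, the module $B/A$ is singular, and $M/B$ is singular by assumption. The main point is that singular modules are closed under extensions when $R$ is right non-singular. I would prove this directly. Take $m\in M$; then $E=(B:m)$ is essential in $R$. For each $e\in E$, the element $me+A$ lies in $B/A$, so $(A:me)$ is essential. The right ideal $L=\{r\in R: mr\in A\}$ contains $e\,(A:me)$ for every $e\in E$. To see that $L$ is essential, use the standard lemma: if $E\trianglelefteq R$ and $e^{-1}L\trianglelefteq R$ for all $e\in E$, then $L\trianglelefteq R$. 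Given $0\ne r$, pick $s$ with $0\ne rs\in E$, then $t$ with $0\ne rst$ and $rst\in L$. Here one uses that $rs\cdot(A:mrs)$ is nonzero because $R$ is right non-singular: the essential right ideal $(A:mrs)$ cannot lie in the annihilator of the nonzero element $rs$. I expect this to be the main obstacle, and it is the only place the hypothesis is used.

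For (3), first (a)$\Rightarrow$(b). Let $0\ne m\in M$; we want $mR\cap(B+\Sing(M))\ne0$. Since $m+B$ is singular, $(B:m)$ is essential. If $mr\ne0$ for some $r\in(B:m)$, we are done. Otherwise $m$ annihilates an essential right ideal, so $m\in\Sing(M)$ and $mR$ itself meets $B+\Sing(M)$. Next, (b)$\Rightarrow$(c). Choose, by Zorn, $H\le B$ maximal with $H\cap\Sing(M)=0$; then $H\oplus\Sing(M)\trianglelefteq B+\Sing(M)$. This step needs a small check: a complement of $\Sing(M)\cap B$ in $B$, together with $\Sing(M)$, is essential in $B+\Sing(M)$. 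It follows that $H\oplus\Sing(M)\trianglelefteq M$, so $M/(H\oplus\Sing(M))$ is singular by (1). Its extension by $(H\oplus\Sing(M))/H\cong\Sing(M)$ is singular, so $M/H$ is singular by the extension-closure from (2). Finally, (c)$\Rightarrow$(a) holds because $M/B$ is a quotient of the singular module $M/H$, and quotients of singular modules are singular.
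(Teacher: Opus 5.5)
Your proof is correct and follows the paper's argument step for step, except that you prove directly that singular modules over a right non-singular ring are closed under extensions, whereas the paper cites \cite[Remark 11.3]{T} (Goldie-radical modules are singular). The essentiality $H\oplus\Sing(M)\trianglelefteq B+\Sing(M)$ that you flag is also asserted without proof in the paper, and it does hold: multiplying a nonzero $b+s$ with $(b+s)R\cap\Sing(M)=0$ by a suitable element of the essential annihilator of $s$ gives a nonzero element of $B$, where $H\oplus(\Sing(M)\cap B)\trianglelefteq B$ applies, and the same trick then removes the $\Sing(M)\cap B$-component.
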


\begin{proof} (1) It is pretty clear, because $xR\cap B\trianglelefteq xR$ for each non-zero $x\in M$.

(2) Since both $B/A$ and $M/B$ are singular, the quotient $M/A$ is a Goldie-radical module. As $R$ is right non-singular, the assertion follows at once from \cite[Remark 11.3]{T}.

(3) Set $S:=\Sing(M)$.

(a) $\Rightarrow$ (b). Proving indirectly, we assume that $B+S$ is not essential in $M$, i.e., there exists non-zero $m\in M$ such that $mR\cap(B+S)=(0)$. Thus, $mR\cap B=(0)$, hence $mR$ embeds into $M/B$. As $mR$ is non-singular, $M$ is not a singular module.

(b) $\Rightarrow$ (c). Let $H$ be a maximal submodule of $B$ with respect to the condition $H\cap S=(0)$. Thus, $H+S\trianglelefteq B+S \trianglelefteq M$ and so $M/(H+S)$ is singular by (1). Furthermore, $(S+H)/H$ is singular, which gives that $M/H$ is Goldie-radical. Therefore, $M/H$ is singular again applying \cite[Remark 11.3]{T}.

(c) $\Rightarrow$ (a). It is straightforward.
\end{proof}

We are now prepared to establish the following necessary condition for a super Bassian module.

\begin{prop}\label{HNP} Let $R$ be a non-primitive HNP ring, and let $F$ be a finitely generated submodule of a module $M$. If $M/F$ is singular such that the $P$-primary component of $M/F$ is Noetherian for each maximal invertible ideal $P$, then $M$ is super Bassian.
\end{prop}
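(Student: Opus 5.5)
Throughout, super Bassian will be checked through condition (3) of the equivalence lemma above: for submodules $N\le K\le M$ with a monomorphism $\alpha\colon M/N\to M/K$, we must show $N=K$. Note first that the hypothesis passes to quotients. If $\bar M=M/N$ and $\bar F=(F+N)/N$, then $\bar F$ is finitely generated and $\bar M/\bar F\cong M/(F+N)$ is a quotient of the singular module $M/F$, hence singular. Moreover, $\bar M/\bar F$ is a direct sum of its primary components, since over a non-primitive HNP ring a singular module decomposes into $P$-primary parts indexed by the maximal invertible ideals $P$ (Tuganbaev's theory of torsion modules over Dedekind-like rings). Each such component is an image of the corresponding component of $M/F$, so it is Noetherian. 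It therefore suffices to show that every module $M$ satisfying the hypothesis is Bassian, and then apply this to all quotients $M/N$.

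So let $\alpha\colon M\to M/K$ be a monomorphism, and suppose $K\ne 0$; I aim for a contradiction. The first step is to reduce to the singular case. Set $S=\Sing(M)$. Since $M/F$ is singular, Lemma~\ref{sing}(3) gives a submodule $H\le F$ with $H\cap S=(0)$ and $M/H$ singular. Then $H$ is finitely generated (Noetherian ring) and non-singular, so it has finite uniform dimension $d$, and $\dim M/S = d$. Monomorphisms preserve singularity, so $\alpha(S)\subseteq \Sing(M/K)$. Comparing the uniform dimensions of the non-singular parts, the inequality $\dim (M/K)/\Sing(M/K)\le d$, together with injectivity of $\alpha$ on a copy of $H$, should force $K\subseteq S$ up to essential extension, i.e.\ $K$ is singular. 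The route is this: if $K\not\subseteq S$, then $K\cap H'\ne 0$ for a non-singular complement, and $M/K$ has non-singular rank $<d$, whereas $\alpha(H)\cap\Sing(M/K)=0$ gives rank $\ge d$.

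With $K$ singular, the problem localizes at the primes. Both $S$ and $M/F$ split as direct sums of $P$-primary components, and $\alpha$ respects these components. The $P$-component $S_P$ sits in an exact sequence with the image of $F$ (finitely generated) and $(M/F)_P$ (Noetherian), so $S_P$ is Noetherian. The sequence is
\[
0\to S_P\cap F\to S_P\to (M/F)_P,
\]
so $S_P$ is finitely generated because $R$ is Noetherian. Since $K\ne0$ is singular, $K_P\ne 0$ for some $P$. Then $\alpha$ restricts to a monomorphism from $S_P$ into the $P$-component of $\Sing(M/K)$. I expect this component to be a quotient of $S_P$ by $K_P$, or at least an extension of it by a finitely generated piece coming from $F$.

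The main obstacle is exactly this last control: $\Sing(M/K)$ need not equal $S/K$, because non-singular elements of $M$ can become singular modulo $K$. To handle it I would count composition-type invariants rather than use $S/K$ directly. The uniform dimension of a finitely generated $P$-primary module, or its length in the socle layers $\Soc$, is finite. An embedding of $S_P$, which contains $K_P\neq0$, into $\Sing(M/K)_P$ contradicts the equality of the finite invariant computed from the sequence $0\to F\cap\cdots$. If this bookkeeping fails, the fallback is the Bassian property of Noetherian modules. One tries to replace $M$ by a Noetherian submodule $F+X$, where $X$ is the countably generated submodule constructed in the preceding non-Hopfian proposition. Since $X$ is built from $x_0\in K_P$ and iterates of $\alpha$, it lies in $F+S_P$, which is Noetherian; hence it is Hopfian, giving the contradiction.
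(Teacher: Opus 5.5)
Your reduction to showing that every module satisfying the hypothesis is Bassian is the same as the paper's, and so is your proof that $S_P$ is Noetherian via $S_P\cap F$ and $(S_P+F)/F\hookrightarrow (M/F)_P$. From there the paper does not argue directly. For each quotient $\tilde M$ it checks the two hypotheses of Tuganbaev's Theorem 3:
\begin{itemize}
\item $\tilde M/\Sing(\tilde M)$ has finite dimension, since it contains the Noetherian essential submodule $(\tilde F+\tilde S)/\tilde S$;
\item every $\tilde S(P)$ is Noetherian.
\end{itemize}
It then cites that theorem. Proving Bassianness by hand instead is a legitimate alternative. Your first step, forcing $K\subseteq S$ by comparing $\dim (M/K)/\Sing(M/K)\ge d$ with the drop in dimension, is sound. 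You should, however, state where the drop comes from: additivity of uniform dimension along the closure of $(K+S)/S\neq 0$ in the nonsingular module $M/S$.

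The genuine gap is the final step. You assert that $\Sing(M/K)$ may be strictly larger than $S/K$. You then offer two ways around this: an unexecuted ``composition-type invariant'' count, and a fallback whose key containment $X\subseteq F+S_P$ is never justified. As stated, the count cannot produce a contradiction. If $\Sing(M/K)_P$ really were an extension of $S_P/K_P$ by an extra piece coming from $F$, that piece could make up for the loss of $K_P$, and no length or dimension comparison would be violated.

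The missing observation is that this obstacle does not exist. Once $K\subseteq S$, suppose $xE\subseteq K$ for an essential right ideal $E$. Then $x+S\in\Sing(M/S)=0$, because $M/S$ is nonsingular over the nonsingular ring $R$ (\cite[Proposition 1.1]{T}, as used in the paper's proof). Hence $\Sing(M/K)=S/K$. Therefore $\alpha$ maps $S_P$ into $(M/K)_P=(S/K)_P\cong S_P/K_P$. This gives a monomorphism from the Noetherian module $S_P$ into its quotient by $K_P\neq 0$, which is impossible. With this line inserted your direct argument closes. Your fallback also becomes valid, since the elements $x_n$ can then be chosen inside $S_P$.
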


\begin{proof} First, note that $R$ is non-singular, because it is obviously hereditary. We will show, for arbitrary factor $\tilde{M}$ of $M$, that it satisfies the hypothesis of \cite[Theorem 3]{T}, i.e., that $\tilde{M}/\Sing(\tilde{M})$ is finite dimensional and every primary component of $\Sing(\tilde{M})$ is Noetherian, which will suffice to get the desired assertion.

To that target, let $X$ be an arbitrary submodule of $M$, and denote $\tilde{M}=M/X$, $\tilde{F}=F+X/X$ and
$\tilde{S}=\Sing(\tilde{M})$.

Since $M/F$ is singular, observe that $\tilde{M}/(\tilde{F}+\tilde{S})$ is singular as well. Furthermore, \cite[Proposition 1.1]{T} reaches us that $\tilde{M}/\tilde{S}$ is non-singular, whence
$(\tilde{F}+\tilde{S})/\tilde{S} \trianglelefteq\tilde{M}/\tilde{S}$ employing Lemma~\ref{sing}. As $F$ is finitely generated and $R$ is Noetherian, one knows that $(\tilde{F}+\tilde{S})/\tilde{S}$ is Noetherian too, which means that $\tilde{M}/\tilde{S}$ has finite Goldie dimension.

Next, suppose that $P$ is a maximal invertible ideal of $R$. Thanks to Lemma~\ref{sing}, there exists a submodule $\tilde{H}$ of $\tilde{F}$ such that $\tilde{H}\cap \tilde{S}=(0)$ and $\tilde{M}/\tilde{H}$ is singular. Then, $\tilde{S}$ and so the $P$-primary component $\tilde{S}(P)$ of $\tilde{S}$ embeds into $\tilde{M}/\tilde{H}$. Moreover, $(\tilde{S}(P)+\tilde{F})/\tilde{F}$ embeds into the $P$-primary component of $\tilde{M}/\tilde{F}$, which is a quotient of the $P$-primary component of $M/F$ viewing \cite[Remark 8]{T}. Hence, $$(\tilde{S}(P)+\tilde{F})/\tilde{F}\cong \tilde{S}(P)/(\tilde{S}(P)\cap \tilde{F})$$ is Noetherian by hypothesis. Since $\tilde{S}(P)\cap \tilde{F}$ is clearly Noetherian, we have proved that $\tilde{S}(P)$ is Noetherian as well.

Consequently, it remains to apply \cite[Theorem 3]{T}, which claims that $\tilde{M}$ is Bassian. This proves that $M$ is super Bassian after all, as wanted.
\end{proof}

Before to proceed, we come to the following.

\begin{lm}\label{f.g.submodule} If $M$ is a super Bassian module over a non-primitive Dedekind prime ring, then there exists a finitely generated non-singular submodule $H$ of a module $M$ such that $M/H$ is singular.
\end{lm}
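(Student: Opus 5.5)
Fix a super Bassian module $M$ over a non-primitive Dedekind prime ring $R$ and set $S=\Sing(M)$. The plan is to show that $M/S$ has finite Goldie dimension. Once that is known, we pick a finitely generated submodule $H$ with $H\cap S=(0)$ and $M/H$ singular. The construction of $H$ comes from Lemma~\ref{sing}(3).

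First, $M/S$ is a factor of $M$, so it is again super Bassian. By \cite[Proposition 1.1]{T} it is non-singular. Suppose $M/S$ had infinite Goldie dimension. Then it contains an infinite direct sum $\bigoplus_{n<\omega} U_n$ of nonzero uniform submodules. Each $U_n$ is non-singular over the HNP ring $R$, so it contains a cyclic non-singular submodule. That submodule is isomorphic to a uniform right ideal of $R$, and hence embeds essentially into the simple Artinian quotient ring $Q$.

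The key step is to produce a factor of $M/S$ of the form $T^{(\omega)}$ with $T\neq 0$, which contradicts Lemma~\ref{super}. Over a Dedekind prime ring every uniform right ideal is isomorphic to a right ideal contained in a fixed uniform one $U$. Indeed, there are only finitely many isomorphism classes up to stable equivalence, and we may shrink each summand. So we may assume $M/S\supseteq \bigoplus_n U_n$ with all $U_n\cong U$.

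Now pass to a quotient that kills the torsion-free rank. Choose a maximal invertible ideal $P$; since $R$ is non-primitive, such ideals exist in abundance. Consider the submodule $N$ generated by the $P$-divisible-type elements, or more concretely take $\overline M=(M/S)/(M/S)P$. The factor $(M/S)P\cap\bigoplus U_n$ restricts to $\bigoplus U_nP$, and $U/UP$ is a nonzero module of finite length annihilated by $P$. Over the simple Artinian ring $R/P$ it is semisimple.

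The main obstacle is that $\bigoplus U_n$ need not be a direct summand, or even pure, in $M/S$. So the image of $\bigoplus U_n/U_nP$ in $\overline M$ may collapse. I would handle this by choosing the $U_n$ inside a maximal essential direct sum and replacing $M/S$ by its image in the injective hull $E(M/S)\cong Q^{(I)}$ with $I$ infinite. One could try to adjust the generators so that $\bigoplus U_n$ becomes pure; alternatively, use that torsion-free modules over Dedekind prime rings of infinite rank admit a factor of $M/S$ that is an infinite direct sum of copies of a simple $R/P$-module, after choosing $P$ avoiding the finitely many primes involved in each relation.

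If this route proves too delicate, I would fall back on a second approach. Compare $M/S\supseteq X=\bigoplus U_n$ and argue that $(M/S)/X$ is singular and maps onto a non-Bassian module via \cite[Theorem 3]{T}. That theorem characterizes Bassian modules as those whose non-singular part has finite dimension, and applying it to the factor $M/S$ itself directly yields that $M/S$ has finite Goldie dimension. In fact, this is the cleanest route. Since $M/S$ is Bassian, \cite[Theorem 3]{T} forces $\dim(M/S)<\infty$ immediately, provided the theorem's necessity direction is available.

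Granting finite dimension, choose $H$ maximal in $M$ with $H\cap S=(0)$. Then $H\oplus S$ is essential in $M$, so $H$ embeds in $M/S$ and has finite Goldie dimension. Replace $H$ by a direct sum of finitely many cyclic uniform submodules, which is essential in $H$. This new $H$ is finitely generated and non-singular. Since $H+S$ is still essential in $M$, Lemma~\ref{sing}(3) gives that $M/H$ is singular.
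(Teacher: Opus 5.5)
\textbf{There is a genuine gap.} Your reduction to showing that $M/\Sing(M)$ has finite Goldie dimension is fine. So is your final step (take $H$ maximal with $H\cap S=(0)$, shrink it to a finite direct sum of cyclic uniform submodules, apply Lemma~\ref{sing}(3)), which coincides with the end of the paper's proof. But finite-dimensionality is the whole content of the lemma, and you never prove it.

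Your first route stops exactly where the work lies. The image of $\bigoplus U_n/U_nP$ in $(M/S)/(M/S)P$ really can collapse, for instance when $M/S$ is $P$-divisible.

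Your ``cleanest route'' rests on a misreading. \cite[Theorem 3]{T} is only a sufficient condition for being Bassian, and that is the only direction the paper uses. The converse you need (Bassian implies $M/\Sing(M)$ has finite Goldie dimension) is false over non-primitive Dedekind prime rings in general. Take $R=k[x]$ with $k$ an uncountable field, and split the primes $x-a$ into countably many pairwise disjoint uncountable sets $S_n$. Then $\bigoplus_{n<\omega}R[S_n^{-1}]$ is torsion-free of infinite rank but Bassian. Here is why: one may reduce to torsion-free quotients $M/N$. In such a quotient, every $S_n$-divisible submodule lies on the $k(x)$-line through the image of the $n$-th coordinate. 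This is because after a linear functional killing that line, only countably many primes can occur in denominators. So an embedding $M\to M/N$ forces $N=(0)$. Notice also that your second route uses only that $M/S$ is Bassian, never that further quotients are, so it could not suffice.

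The missing idea is to exploit the Bassian property of a suitably chosen \emph{singular} quotient, as the paper does.
\begin{enumerate}
\item Take $B\le M$ with $B\cap S=(0)$ and $M/B$ singular (Lemma~\ref{sing}(3)).
\item If $B$ had infinite Goldie dimension, \cite[Lemma 4.3]{T} gives a free submodule $F\le B$ of infinite rank.
\item Let $G$ be a complement of $F$ in $B$, and let $P$ be a maximal invertible ideal.
\item Then $G\oplus FP\oplus S\trianglelefteq M$, so $\tilde M=M/(G\oplus FP\oplus S)$ is singular by Lemma~\ref{sing}(1). It is Bassian because $M$ is super Bassian.
\item By the modular law, $F\cap(G\oplus FP\oplus S)=FP$. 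Hence the infinitely generated module $F/FP$, which is annihilated by $P$, embeds in the $P$-primary component of $\tilde M$. This contradicts \cite[Theorem 1]{T}.
\end{enumerate}
Quotienting by the complement $G$ is exactly what prevents the collapse you worried about. Passing to a singular quotient lets you invoke \cite[Theorem 1]{T} instead of having to produce a quotient of the form $T^{(\omega)}$.
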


\begin{proof} First, observe that a Dedekind prime ring is HNP and so a non-singular Goldie prime ring (see, e.g., \cite[Remark 2]{T}), which means that we may apply Lemma~\ref{sing} and \cite[Lemma 4]{T}.

Put $S:=\Sing(M)$ and notice that, by Lemma~\ref{sing}, there exists a submodule $B$ of $M$ such that $B\cap S=(0)$, $M/B$ is singular, and $B$ is non-singular. Assume to contrary that $B$ has infinite Goldie dimension. Then, looking at \cite[Lemma 4.3]{T}, there exists a free submodule $F$ of $B$ of infinite rank.

Now, denote by $G$ a maximal submodule of $B$ satisfying the condition $G\cap F=(0)$, and suppose that $P$ is an arbitrary maximal invertible ideal of $R$. Since
\[
G\oplus FP\oplus S\ \ \trianglelefteq\ \  G\oplus F\oplus S\ \  \trianglelefteq\ \  B\oplus S\ \  \trianglelefteq\ \  M,
\]
we obtain that $\tilde{M}=M/(G\oplus FP\oplus S)$ is singular again with the aid of Lemma~\ref{sing}. Note that $\tilde{M}$ is Bassian and $F/FP$ is embedding into $P$-primary component of $\tilde{M}$. But, as $F/FP$ is infinitely generated, we get a contradiction with \cite[Theorem 1]{T}, which asserts that all primary components of $\tilde{M}$ are Noetherian.

Thus, $B$ is of finite dimension meaning that $B$ contains a finitely generated essential submodule $H$. As $H+S\trianglelefteq B+S\trianglelefteq M$, the module $M/H$ is singular with the help of Lemma~\ref{sing}, and we are done.
\end{proof}

We now have at our disposal all the ingredients necessary to attack the following statement.

\begin{thm}\label{super Bassian}
The following conditions are equivalent for a module $M$ over a non-primitive Dedekind prime ring:
\begin{enumerate}
\item[(1)] $M$ is super Bassian.
\item[(2)] There exists a finitely generated non-singular submodule $H$ of $M$ such that $M/H$ is singular and all primary components of $M/H$ are Noetherian.
\item[(3)] There exists a finitely generated submodule $H$ of $M$ such that $M/H$ is singular and all primary components of $M/H$ are finitely generated.
\end{enumerate}
\end{thm}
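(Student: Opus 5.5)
I would prove the cycle (1) $\Rightarrow$ (2) $\Rightarrow$ (3) $\Rightarrow$ (1), using mostly the earlier results together with the basic structure theory of non-primitive Dedekind prime rings from \cite{T}.

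For (1) $\Rightarrow$ (2), Lemma~\ref{f.g.submodule} provides a finitely generated non-singular submodule $H\le M$ with $M/H$ singular. Since $M$ is super Bassian, the factor $M/H$ is Bassian. By \cite[Theorem 1]{T}, every primary component of the singular Bassian module $M/H$ is Noetherian, which is exactly (2).

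For (2) $\Rightarrow$ (3), it suffices to observe that Noetherian modules are finitely generated. The same $H$ then works, after dropping the requirement that $H$ be non-singular.

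For (3) $\Rightarrow$ (1), I would apply Proposition~\ref{HNP}. A non-primitive Dedekind prime ring is a non-primitive HNP ring, so that proposition applies once we know that each $P$-primary component of $M/H$ is Noetherian for every maximal invertible ideal $P$. Each such component is finitely generated by hypothesis, and $R$ is right Noetherian, so finitely generated modules are Noetherian. Proposition~\ref{HNP} then yields that $M$ is super Bassian.

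The only point needing care is matching the terminology: the primary components in (2) and (3) should be those with respect to maximal invertible ideals, as in Proposition~\ref{HNP} and \cite{T}. In a Dedekind prime ring every non-zero ideal is invertible, so the maximal invertible ideals are exactly the non-zero maximal ideals. This identification is the one step I would verify against \cite{T}; otherwise the argument consists of citations.
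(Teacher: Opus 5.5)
Your proposal is correct and matches the paper's proof step for step. The paper also uses Lemma~\ref{f.g.submodule} with \cite[Theorem 1]{T} for (1)$\Rightarrow$(2), treats (2)$\Rightarrow$(3) as obvious, and for (3)$\Rightarrow$(1) uses right Noetherianity of $R$ (via \cite[Remark 2]{T}) to upgrade finitely generated primary components to Noetherian ones before applying Proposition~\ref{HNP}.
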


\begin{proof} (1) $\Rightarrow$ (2). Consulting with Lemma~\ref{f.g.submodule}, there exists a finitely generated non-singular submodule $H$ of $M$ such that $M/H$ is singular. As $M/H$ is Bassian, all primary components of $M/H$ are Noetherian thanking to \cite[Theorem 1]{T}.

(2) $\Rightarrow$ (3). It is just obvious.

(3) $\Rightarrow$ (1). Since $R$ is right Noetherian owing to \cite[Remark 2]{T}, all primary components of $M/H$ are Noetherian. Now, $M$ is super Bassian in conjunction with Proposition~\ref{HNP}.
\end{proof}

The following immediate consequences of \cite[Theorem 1]{T} and the previous theorem clarify the connection between Bassian and super Bassian over non-primitive Dedekind prime rings.

\begin{cor}\label{sB} Let $M$ be a module over a non-primitive Dedekind prime ring.
\begin{enumerate}
\item[(1)] If $M$ is singular, then the following conditions are equivalent:
\begin{enumerate}
\item[(a)] $M$ is super Bassian.
\item[(b)] Each primary component of $M$ is Noetherian.
\item[(c)] $M$ is Bassian.
\end{enumerate}
\item[(2)] $M$ is super Bassian if, and only if, $M$ is Bassian and there exists a finitely generated submodule $H$ of $M$ such that $M/H$ is singular Bassian.
\end{enumerate}
\end{cor}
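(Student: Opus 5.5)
For part (1), assume $M$ is singular and prove (a)$\Leftrightarrow$(b)$\Leftrightarrow$(c) by combining Theorem~\ref{super Bassian} with \cite[Theorem 1]{T}. The latter says that a singular module over a non-primitive Dedekind prime ring is Bassian exactly when each of its primary components is Noetherian, which gives (b)$\Leftrightarrow$(c) at once. For (a)$\Rightarrow$(c), note that $M$ is an epimorphic image of itself, so a super Bassian module is Bassian. For (b)$\Rightarrow$(a), apply Theorem~\ref{super Bassian}, implication (3)$\Rightarrow$(1), with $H=(0)$: then $M/H=M$ is singular and its primary components are Noetherian. Over a Noetherian ring, Noetherian means finitely generated, so condition (3) of the theorem holds. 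Alternatively one may invoke Proposition~\ref{HNP} directly with $F=(0)$, since a Dedekind prime ring is HNP.

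For part (2), the forward direction goes as follows. If $M$ is super Bassian, then $M$ is Bassian as its own quotient. By Theorem~\ref{super Bassian} there is a finitely generated $H\le M$ with $M/H$ singular, and $M/H$ is Bassian because it is a quotient of a super Bassian module. Hence $M/H$ is singular Bassian.

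For the converse, assume $H$ is finitely generated and $M/H$ is singular and Bassian. By part (1) (or directly by \cite[Theorem 1]{T}), every primary component of $M/H$ is Noetherian, hence finitely generated. Theorem~\ref{super Bassian}(3) then yields that $M$ is super Bassian, so the hypothesis that $M$ itself is Bassian is not even needed for this direction; it appears in the statement only to make the equivalence read symmetrically.

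There is no substantial obstacle here. The only point to check is that \cite[Theorem 1]{T} indeed characterizes singular Bassian modules over non-primitive Dedekind prime rings by the Noetherian condition on primary components, in both directions. The proof of Theorem~\ref{super Bassian} uses its necessity, and the sufficiency is the case $F=(0)$ of Proposition~\ref{HNP}. If only one direction of \cite[Theorem 1]{T} is available, we recover the missing one from Proposition~\ref{HNP} with $F=(0)$ in exactly this way.
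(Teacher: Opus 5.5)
Your proposal is correct and follows the paper's route. The paper gives no separate proof and presents the corollary as an immediate consequence of \cite[Theorem 1]{T} (for (b)$\Leftrightarrow$(c)) together with Theorem~\ref{super Bassian}, applied with $H=(0)$ in the singular case, exactly as you argue. Your observation that the hypothesis ``$M$ is Bassian'' is redundant in the converse of (2) is also correct.
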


We conclude the section by a reformulation of the result in commutative case equipped with one easy example.

\begin{cor}\label{sBcom} A module $M$ over a Dedekind domain is super Bassian if, and only if, there exists a finitely generated torsion-free submodule $F$ of $M$ such that $M/F$ is torsion and all primary components of $M/F$ are finitely generated.
\end{cor}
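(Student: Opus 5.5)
The plan is to specialize Theorem~\ref{super Bassian} to the commutative case. A Dedekind domain $R$ that is not a field is a non-primitive Dedekind prime ring. In that setting the paper's notions become the familiar commutative ones:
\begin{enumerate}
\item[(i)] $\Sing(M)=\tau(M)$, since over a commutative domain an element has essential annihilator exactly when its annihilator is nonzero;
\item[(ii)] ``singular'' means torsion and ``non-singular'' means torsion-free;
\item[(iii)] the maximal invertible ideals are exactly the nonzero prime (maximal) ideals $P$;
\item[(iv)] the $P$-primary component of a torsion module is its usual $P$-primary part.
\end{enumerate}
Since $R$ is Noetherian, ``Noetherian'' and ``finitely generated'' coincide for the primary components. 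I will also check that a commutative Dedekind domain that is not a field is not primitive. A primitive commutative ring is a field, because the annihilator of a faithful simple module $R/\mathfrak m$ is $\mathfrak m$, which would then have to be zero. Every nonzero ideal is invertible, so $R$ is indeed a non-primitive Dedekind prime ring.

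The field case $R$ is degenerate and I will treat it separately. There, torsion modules are zero and torsion-free means arbitrary. Super Bassian means every quotient is Bassian, which forces $\dim M<\infty$ by Lemma~\ref{super}. Conversely, $F=M$ finite dimensional satisfies the condition of the corollary, and finite-dimensional vector spaces are Noetherian and hence super Bassian. So both sides say that $M$ is finitely generated.

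For $R$ not a field, I will prove the two directions as follows.
\begin{enumerate}
\item[$(\Rightarrow)$] Apply (1)$\Rightarrow$(2) of Theorem~\ref{super Bassian}. It yields a finitely generated non-singular (that is, torsion-free) submodule $H$ with $M/H$ singular (that is, torsion) and all primary components Noetherian (that is, finitely generated). Take $F=H$.
\item[$(\Leftarrow)$] Given such an $F$, it is in particular a finitely generated submodule with $M/F$ torsion and finitely generated primary components. Apply (3)$\Rightarrow$(1) of Theorem~\ref{super Bassian}.
\end{enumerate}

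The only real care needed is the dictionary between the non-commutative notions of \cite{T} and the commutative ones. The key point is that the primary components in \cite{T}, indexed by maximal invertible ideals, agree with the classical $P$-primary parts $\{x: xP^n=0 \text{ for some } n\}$. For a torsion module over a Dedekind domain these give the standard decomposition $M/F=\bigoplus_P (M/F)(P)$. I expect to cite \cite{T} for this identification rather than reprove it, so there is no substantial obstacle.
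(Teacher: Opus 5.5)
Your proposal is correct and follows the paper's route: the paper also obtains the corollary by specializing Theorem~\ref{super Bassian} to the commutative case, using the dictionary singular $=$ torsion, non-singular $=$ torsion-free, maximal invertible ideals $=$ nonzero primes, and Noetherian $=$ finitely generated for the primary components. Your separate treatment of the field case is a small extra that the paper leaves implicit; it is needed there because a field is primitive, so the theorem does not literally apply.
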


Note that the previous corollary follows in case of Abelian groups also from \cite[Theorem 2.5]{DG}.

\begin{exm}
If $M$ is a torsion-free module of rank 1 over a principal ideal domain $R$ such that $pM\ne M$ for each prime element $p\in R$, then $M$ is super Bassian having in mind Corollary~\ref{sBcom}. In particular, the Abelian group
\[
A=\{\frac{z}{\prod p_i}\in \Q \mid z\in \Z,\  p_1,\dots,p_n ~ {\rm are ~ distinct ~ primes}\}
\] is always super Bassian.
\end{exm}

\section{Hereditarily Bassian Modules}

We start here by recalling once again the second version of the strengthened notion of a Bassian module.

\begin{definition} Let $R$ be a ring. A right $R$-module $M$ is said to be a {\it hereditarily Bassian} module if every submodule is Bassian.
\end{definition}

Observe immediately that the class of all hereditarily Bassian modules is closed under submodules.

\medskip

Our first preliminary observations concern the necessary condition of being a hereditarily Bassian module over a semi-perfect ring.

\begin{lm}\label{semi-perfect} If $M$ is a hereditarily Bassian module over a semi-perfect ring, then the socle of $M$ is finitely generated.
\end{lm}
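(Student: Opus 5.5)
The plan is to argue by contraposition. Suppose $\Soc(M)$ is not finitely generated. Then $\Soc(M)$ is an infinitely generated semisimple module, hence an infinite direct sum of simple modules. Over a semi-perfect ring $R$ there are, up to isomorphism, only finitely many simple right modules. Each is of the form $e_iR/e_iJ$ for the finitely many basic idempotents $e_1,\dots,e_n$ coming from a complete set of orthogonal local idempotents. Grouping the simple summands of $\Soc(M)$ by isomorphism type, the pigeonhole principle gives a simple module $S$ that occurs infinitely often. Hence $\Soc(M)$, and thus $M$, contains a submodule $T\cong S^{(\omega)}$.

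Because $M$ is hereditarily Bassian, every submodule of $M$ is Bassian; in particular $T$ must be Bassian. We now show that $S^{(\omega)}$ is not Bassian, which is the remark recorded before Lemma~\ref{super}: an infinite direct power of a nonzero module is never Bassian. Explicitly, take $N$ to be the first coordinate copy of $S$ in $S^{(\omega)}$. Then $S^{(\omega)}/N\cong S^{(\omega)}$, so the identity composed with this isomorphism gives a monomorphism $S^{(\omega)}\to S^{(\omega)}/N$ with $N\neq (0)$. This contradicts the Bassian property, and so $\Soc(M)$ must be finitely generated.

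One point needs care: "hereditarily Bassian" was defined as every submodule being Bassian (the introduction says proper submodules). If $T=M$ could occur, we would instead apply the Bassian property of $M$ itself, or replace $T$ by the proper submodule $S^{(\omega)}$ obtained by discarding one summand, which is still isomorphic to $S^{(\omega)}$. So this is harmless.

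The only genuinely ring-theoretic input, and the step I regard as the main obstacle, is the finiteness of the set of isomorphism classes of simple modules over a semi-perfect ring. I would justify it by noting that every simple module $S$ satisfies $SJ=0$, so $S$ is a simple module over the semisimple Artinian ring $R/J$, which has only finitely many simple modules up to isomorphism. In fact this only uses that $R/J$ is semisimple; idempotent lifting is not needed. Everything else is routine.
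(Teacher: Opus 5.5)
Your proof is correct and follows the same route as the paper: over a semi-perfect ring there are only finitely many isomorphism types of simple modules, so an infinitely generated $\Soc(M)$ contains a copy of $S^{(\omega)}$, and this copy is not Bassian. You also supply the details the paper leaves implicit, namely that simples are $R/J$-modules, the pigeonhole step, the explicit isomorphism $S^{(\omega)}/N\cong S^{(\omega)}$, and the harmless ``proper submodule'' caveat.
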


\begin{proof} Showing indirectly, assume that $\Soc(M)$ is infinitely generated. Since there exists only finitely many non-isomorphic simple modules, $\Soc(M)$ contains a submodule of a module $S^{(\omega)}$ for a suitable simple module $S$, which is not Bassian, and thus $M$ is not hereditarily Bassian.
\end{proof}

\begin{lm}\label{noetherian2} Any finitely generated module over a right Noetherian ring is hereditarily Bassian.
\end{lm}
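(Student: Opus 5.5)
The proof reduces to two facts: every submodule of a finitely generated module over a right Noetherian ring is again Noetherian, and every Noetherian module is Bassian. Let $M$ be a finitely generated right module over a right Noetherian ring $R$. Then $M$ is a homomorphic image of a finite free module $R^n$. Since $R_R$ is Noetherian, $R^n$ is Noetherian, and so is its quotient $M$. It follows that every submodule $N\le M$ is Noetherian, because a submodule of a Noetherian module is Noetherian.

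It remains to see that a Noetherian module $N$ is Bassian, as already recalled in the proof that finitely generated modules over right Noetherian rings are super Bassian. For completeness, here is the argument. Suppose $K\le N$ and $\iota\colon N\to N/K$ is a monomorphism, and assume $K\ne(0)$. Write $\pi\colon N\to N/K$ for the canonical projection. For each submodule $X\le N$, the image $\iota(X)$ can be written as $\pi(X')$ for a unique $X'$ with $K\le X'\le N$. Start with $K_0=K$ and $X_0=(0)$. Define $K_{n+1}$ by $\pi(K_{n+1})=\iota(K_n)$, and note that $K_{n+1}\supseteq K$.

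One checks by induction that $K_1\subsetneq K_2\subsetneq\cdots$ is a strictly ascending chain.
\begin{itemize}
\item Base step: $K_1\supseteq K=K_0$. The inclusion $K_0\subseteq K_1$ then propagates, because both $\iota$ and $\pi^{-1}$ preserve inclusion.
\item Strictness: if $K_n=K_{n+1}$ held for some $n$, then injectivity of $\iota$ and of the correspondence $\pi^{-1}$ on submodules containing $K$ would give $K_{n-1}=K_n$. Descending in this way yields $K_0=K_1$, i.e. $\iota(K)=\pi(K)=0$. Since $\iota$ is injective, this forces $K=(0)$, a contradiction.
\end{itemize}
The strictly ascending chain contradicts the Noetherian property of $N$. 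Hence $K=(0)$, and $N$ is Bassian. As an alternative to this chain argument, the earlier proposition can be used: a non-Bassian module contains a non-Hopfian submodule, while Noetherian modules are Hopfian.

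Combining the two steps, every submodule of $M$ is Bassian, so $M$ is hereditarily Bassian. The only point needing care is the Noetherian-implies-Bassian step, and that is standard (compare \cite[Example 2.1]{DKTZ}).
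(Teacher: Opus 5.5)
Your proof is correct and follows the paper's route: a finitely generated module over a right Noetherian ring is Noetherian, its submodules are Noetherian, and Noetherian modules are Bassian. The paper simply cites the last fact, while you also give a valid ascending-chain proof of it; the stray definition $X_0=(0)$ is never used and can be dropped.
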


\begin{proof} It is apparent, since every Noetherian module is Bassian, every submodule of a Noetherian module is Noetherian as well, and every finitely generated module over a right Noetherian ring is Noetherian too.
\end{proof}

Now, we are ready to characterize hereditarily Bassian modules over quasi-Frobenius rings.

\begin{prop}\label{qF} The following three conditions are equivalent for a module $M$ over a quasi-Frobenius ring.
\begin{enumerate}
\item $M$ is hereditarily Bassian.
\item $M$ is finitely generated.
\item $\operatorname{Soc}(M)$ of $M$ is finitely generated.
\end{enumerate}	
\end{prop}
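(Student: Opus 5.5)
We prove the cycle (1) $\Rightarrow$ (3) $\Rightarrow$ (2) $\Rightarrow$ (1). Throughout, $R$ denotes the quasi-Frobenius ring. We use three standard facts: $R$ is right Artinian, hence right Noetherian and semi-perfect, and every injective right $R$-module is projective.

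For (1) $\Rightarrow$ (3), we apply Lemma~\ref{semi-perfect} directly, since a quasi-Frobenius ring is semi-perfect. For (2) $\Rightarrow$ (1), a finitely generated module over the right Noetherian ring $R$ is hereditarily Bassian by Lemma~\ref{noetherian2}.

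The substantive implication is (3) $\Rightarrow$ (2), and we prove it by embedding $M$ in a finitely generated module.
\begin{enumerate}
\item Since $R$ is right Artinian, every nonzero module has a nonzero socle. More precisely, every nonzero cyclic submodule $xR$ has finite length, so it contains a simple submodule. Hence $\Soc(M)\trianglelefteq M$.
\item Therefore $E(M)=E(\Soc(M))$.
\item By (3), $\Soc(M)=S_1\oplus\dots\oplus S_n$ with each $S_i$ simple. Hence
\[
E(M)\cong E(S_1)\oplus\dots\oplus E(S_n).
\]
\item Over a quasi-Frobenius ring, each $E(S_i)$ is an indecomposable injective module, hence projective. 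It is isomorphic to $eR$ for some primitive idempotent $e$, so it is cyclic. Consequently $E(M)$ is finitely generated.
\item Because $R$ is right Noetherian, the submodule $M\le E(M)$ is finitely generated as well, which gives (2).
\end{enumerate}

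The only point requiring care is the identification $E(S)\cong eR$ for a simple module $S$ over a quasi-Frobenius ring. This is a classical fact: $R_R$ is injective and is a finite direct sum of modules $eR$ with simple essential socles, and every simple module occurs as such a socle. We simply cite it.

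As an alternative that avoids this fact, one can use that $M$ has finite Goldie dimension and that $R$ is Artinian, but the injective-hull argument above is the cleanest. We therefore expect the main obstacle to be only this bookkeeping of standard quasi-Frobenius theory, not any new argument.
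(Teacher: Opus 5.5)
Your proof is correct and takes essentially the same route as the paper: the cycle (1) $\Rightarrow$ (3) $\Rightarrow$ (2) $\Rightarrow$ (1), using Lemma~\ref{semi-perfect}, then essentiality of $\Soc(M)$ together with finite generation of $E(M)=E(\Soc(M))$ and Noetherianity, and finally Lemma~\ref{noetherian2}. Your decomposition $E(M)\cong\bigoplus_{i=1}^n E(S_i)$ with each $E(S_i)\cong eR$ simply spells out the paper's one-line assertion that $E(\Soc(M))$ is a finitely generated projective module.
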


\begin{proof}
(1) $\Rightarrow$ (3). As a quasi-Frobenius ring is semi-perfect, every hereditarily Bassian module has finitely generated socle by Lemma~\ref{semi-perfect}.

(3) $\Rightarrow$ (2). Since a quasi-Frobenius ring is (right) Artinian, the module $M$ is semi-Artinian, hence $\Soc(M)$ is essential in $M$. Furthermore, the injective envelope $E(\Soc(M))\cong E(M)$ of finitely generated $\Soc(M)$ is a finitely generated projective module, which proves that $E(M)$ as well as $M$ are Noetherian.

(2) $\Rightarrow$ (1). Since a quasi-Frobenius ring is (right) Noetherian, the assertion follows from Lemma~\ref{noetherian2}.
\end{proof}

A natural question which immediately arises is of whether or not the previous claim remains true for right Artinian rings. Before we answer it, we need to prove two  assertions. The first one is an elementary but helpful technical observation.

\begin{lm}\label{step} Let $Q,S,X,Y$ be submodules of a module $A$ such that $Q\subseteq S\cap X$ and $X\subseteq Y$. If $S/Q=\Soc(A/Q)$ and $\Soc(A/X)\subseteq Y/X$, then $S\subseteq Y$.
\end{lm}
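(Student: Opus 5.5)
We argue by contradiction: we assume $S\not\subseteq Y$ and produce a nonzero element of $\Soc(A/X)$ lying outside $Y/X$. The natural idea is to pass from $A/Q$ to $A/X$ through the canonical epimorphism $\pi\colon A/Q\to A/X$, which is defined because $Q\subseteq X$. This map sends the socle $S/Q$ of $A/Q$ onto $(S+X)/X$. The image of a semisimple module is semisimple, so $(S+X)/X$ is a semisimple submodule of $A/X$. Hence $(S+X)/X\subseteq \Soc(A/X)\subseteq Y/X$.

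The conclusion then follows directly. Since $(S+X)/X\subseteq Y/X$ and $X\subseteq Y$, we get $S+X\subseteq Y$, and in particular $S\subseteq Y$. So the contradiction framing is not needed; the argument is direct. We note that the hypothesis $Q\subseteq S$ only serves to make $S/Q$ meaningful. The containment $Q\subseteq X$ is what makes $\pi$ well defined.

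The only point that needs care is the claim that a homomorphic image of a semisimple module is semisimple. This is standard: a semisimple module is a sum of simple submodules, and the image of each simple submodule is either zero or simple. Consequently $\pi(S/Q)$ is a sum of simple modules, so it is semisimple and contained in the socle. No step is a serious obstacle.
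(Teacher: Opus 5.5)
Your proof is correct and is essentially the paper's argument: the paper also observes that $(S+X)/X\cong((S/Q)+(X/Q))/(X/Q)$ is a homomorphic image of the semisimple module $S/Q$. Hence it lies in $\Soc(A/X)\subseteq Y/X$, which gives $S\subseteq S+X\subseteq Y$; as you note yourself, the contradiction framing is superfluous, and only the semisimplicity of $S/Q$ is used, not its equality with $\Soc(A/Q)$.
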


\begin{proof} Since $(S+X)/X\cong ((S/Q)+(X/Q)/(X/Q)$ is a semi-simple module, we get
\[(S+X)/X\subseteq \Soc(A/X)\subseteq Y/X,
\]
which immediately implies $S\subseteq S+X\subseteq  Y$.
\end{proof}

\begin{lm}\label{semiart} Let $M$ be a semi-Artinian module over a semi-local ring $R$ with the Jacobson radical $J$ such that $\Soc(M)$ is finitely generated. If there exists natural $n$ satisfying $MJ^n=(0)$, then $M$ is Bassian.
\end{lm}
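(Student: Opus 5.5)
The plan is to argue by contradiction and proceed by induction on $n$, comparing the Loewy (socle) series of $M$ and of $M/N$. Lemma~\ref{step} looks designed for exactly this comparison. Since $R$ is semi-local, for every module $A$ we have $\Soc(A)=\{a\in A\mid aJ=0\}$. So the socle series $0=S_0\le S_1\le\cdots$ of $M$, given by $S_{k+1}/S_k=\Soc(M/S_k)$, satisfies
\[
S_k=\{m\in M\mid mJ^k=0\}.
\]
The hypothesis $MJ^n=(0)$ then gives $S_n=M$. Put $\ell$ for the (finite) length of $S_1=\Soc(M)$. Since $M$ is semi-Artinian, $S_1\trianglelefteq M$.

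Assume that $f\colon M\to M/N$ is a monomorphism with $N\ne(0)$. Since the socle is essential, $N\cap S_1\ne(0)$. Write $T_k/N$ for the $k$-th socle term of $M/N$, so $T_k=\{x\mid xJ^k\subseteq N\}$. Because $f$ is injective and $R$-linear, $xJ^k=0$ if and only if $f(x)J^k=0$. Hence
\[
f^{-1}(T_k/N)=S_k\quad\text{for every } k,\qquad\text{in particular } f(S_1)=f(M)\cap \Soc(M/N).
\]
Iterating $f$ also gives the usual ascending chain $N=N_1<N_2<\cdots$ with $N_{k+1}/N_k\cong N$ and embeddings $M\hookrightarrow M/N_k$. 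This is the source of "infinitely many copies of $N$" that should eventually contradict the finiteness of $\ell$.

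The key step is to transport the finite-length information from the bottom layer upward using Lemma~\ref{step}. For the base case $n=1$, $M=S_1$ has finite length, so it is Noetherian and hence Bassian; this case is immediate. For the inductive step, I would apply Lemma~\ref{step} with $A=M$ and suitable $Q\subseteq S\cap X\subseteq Y$ chosen from the two series, for instance $Q=N$, $S=T_1$, $X=N_k$ and $Y$ a socle term of $M$ pulled back along $f$. The goal is to show that the socle series of $M/N$ is squeezed inside the images of the $S_k$. The targeted conclusion is that $f$ induces an injection $S_1\to\Soc(M/N)$ whose image meets $(N+S_1)/N$ in a proper part of it. Iterating this along the chain $N_k$ then forces the composition length of $\Soc(M)$ to exceed $\ell$ after finitely many steps, which is the contradiction.

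The main obstacle is that the layers $S_{k+1}/S_k$ need not be finitely generated. Over a local ring with $J^2=0$ and $J$ infinitely generated, the dual of $R$ has simple socle but an infinite-dimensional second layer. So no naive length count on all of $M$ can work, and the contradiction must be extracted from the finite socle alone. I expect the bounded Loewy length ($MJ^n=0$) to enter precisely here: since the series has exactly $n$ steps, Lemma~\ref{step} can be applied only $n$ times before reaching $M$. Tracking how $N\cap S_1$ is pushed down through these $n$ steps should yield at least $\ell+1$ independent simple submodules inside $\Soc(M)$ once $k$ is large. I would first try to prove this for $n=2$, where $M/S_1$ is semisimple and the bookkeeping is transparent, and then formalize the induction by passing to $M/S_1$, whose Loewy length is $n-1$, together with the induced map.
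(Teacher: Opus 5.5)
\textbf{There is a genuine gap.} Your proposal never supplies the step that turns finiteness of $\Soc(M)$ into a contradiction. That step is only announced (``tracking how $N\cap S_1$ is pushed down \dots\ should yield at least $\ell+1$ independent simple submodules''), and the two concrete mechanisms you offer for it do not work.

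First, the induction on $n$ through $M/S_1$ cannot be run, because $M/S_1$ need not satisfy the hypothesis of the lemma: its socle $S_2/S_1$ may be infinitely generated. Your own example with $J^2=0$ shows this. There $M/S_1$ is an infinite direct sum of copies of the simple module, hence not Bassian at all, so the induced embedding $M/S_1\hookrightarrow M/T_1$ yields no contradiction by itself.

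Second, applying Lemma~\ref{step} with $A=M$ and $X=N_k$ needs a controllable $Y$ with $\Soc(M/N_k)\subseteq Y/N_k$. But $M/N_k$ only contains a copy of $M$ and is not itself one, so nothing bounds $\Soc(M/N_k)$, and you never specify $Y$. Your preliminary observations ($S_k=\{m\mid mJ^k=0\}$ and $f^{-1}(T_k/N)=S_k$) are correct, but they do not feed into any argument.

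\textbf{The missing idea is to work inside the union of the chain, where the finite socle is actually available.} Build the chain so that, besides $N_k\subsetneq N_{k+1}$, you also have $M\supseteq M_1\supseteq M_2\supseteq\cdots$ with $N_k\subseteq M_k$ and $M_k/N_k\cong M$. To do this, transport $N$ and $f(M)$ along the isomorphism $M\cong M_k/N_k$. Put $A=\bigcup_k N_k$.
\begin{enumerate}
\item Since $A\subseteq M_k$ for every $k$, the module $A/N_k$ embeds in $M_k/N_k\cong M$. Hence $\Soc(A/N_k)$ is finitely generated.
\item Because $A/N_k$ is the directed union of the $N_j/N_k$, all of $\Soc(A/N_k)$ lies in some $N_j/N_k$.
\item Let $A_i$ denote the Loewy terms of $A$. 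Apply Lemma~\ref{step} inside $A$ with $Q=A_i$, $S=A_{i+1}$, $X=N_k\supseteq A_i$ and $Y=N_j$. This gives $A_{i+1}\subseteq N_j$, and by induction every finite Loewy term of $A$ lies in some $N_j$.
\item Since $R$ is semi-local and $AJ^n=(0)$, we have $A=A_n$. Hence $A\subseteq N_j\subsetneq N_{j+1}\subseteq A$, a contradiction.
\end{enumerate}
This is where the bound $MJ^n=(0)$ genuinely enters: it caps the number of steps needed to exhaust $A$, rather than driving an induction over quotients of $M$.
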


\begin{proof} Proving indirectly the stated claim, we assume that $M$ is not Bassian. Then, there exists $X\ne (0)$ and an embedding $\nu: M\to M/X$, i.e., $M\cong\nu(M)$. Set $X_0:=0$, $X_1:=X$, $M_0:=M$, and let $M_1$ be a submodule of $M$ satisfying $X_1\subseteq M_1$ and $M_1/X_1=\nu(M)$. Since $$M\cong M_0/X_0\cong M_1/X_1,$$ we can proceed by induction to obtain sequences of submodules $(X_n\mid n<\omega)$ and $(M_n\mid n<\omega)$ such that $X_{i}\subsetneq X_{i+1}$, $M_i \supseteq M_{i+1}$ and $M_i/X_i\cong M$ for each $i<\omega$. Put $A:=\bigcup_{n<\omega} X_n$. Since $A$ is semi-Artinian, there exists a socle sequence $(S_\alpha\mid S_\alpha<\kappa)$ of $A$, i.e., a smooth sequence of submodules such that $$S_0=0, ~ S_{\alpha+1}/S_\alpha=\Soc(A/S_\alpha) ~ {\rm and} ~ A=\bigcup_{\alpha<\kappa} S_\alpha.$$

We will show now that $\kappa$ is infinite. First, we will prove by induction on $\alpha$ that, for each $\alpha<\min(\omega,\kappa)$, there exists an index $i_\alpha<\omega$ for which $S_\alpha\subseteq X_{i_\alpha}$. As $S_0=X_0=(0)$, we may put $i_0=0$. Let $S_\alpha\subseteq X_{i_\alpha}$. Since $\Soc(A/X_{i_\alpha})\subseteq \Soc(M_{i_\alpha}/X_{i_\alpha})$ is finitely generated and $$A/X_{i_\alpha}=\bigcup_{n>i_\alpha} X_n/X_{i_\alpha},$$ there exists an index $k>i_\alpha$ such that $\Soc(X_k/X_{i_\alpha})=\Soc(A/X_{i_\alpha})$. However, as $S_{\alpha+1}/S_\alpha=\Soc(A/S_\alpha)$, we may apply Lemma~\ref{step} claiming that $S_{\alpha+1}\subseteq X_k$. We next put $i_{\alpha+1}:=k$, thus finishing the induction.

Now, if we assume that $\kappa$ is finite, we get that
$$
A=S_{\kappa-1}\subseteq X_{i_{\kappa-1}}\subsetneq A,
$$
an obvious contradiction.

Thus, $A$ and so $M$ have infinite socle lengths, which means that $MJ^n\ne (0)$ for each $n<\omega$.
\end{proof}

Now, we are able to generalize Proposition~\ref{qF} in the following way.

\begin{thm}\label{artinian} Let $R$ be a semi-local ring with nilpotent Jacobson radical and let $M$ be a module. Then, $M$ is hereditarily Bassian if, and only if, $\Soc(M)$ is finitely generated.
\end{thm}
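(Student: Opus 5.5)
The forward implication comes from Lemma~\ref{semi-perfect}, and the reverse implication reduces to Lemma~\ref{semiart} applied to each submodule. First, though, I have to check the structural hypotheses. A semi-local ring with nilpotent Jacobson radical $J$ is semi-primary. Idempotents lift modulo the nilpotent ideal $J$, so $R$ is semi-perfect; in fact it is right (and left) perfect.

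For ($\Rightarrow$), suppose $M$ is hereditarily Bassian. Since $R$ is semi-perfect, Lemma~\ref{semi-perfect} gives directly that $\Soc(M)$ is finitely generated.

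For ($\Leftarrow$), suppose $\Soc(M)$ is finitely generated and let $N\le M$ be an arbitrary submodule. I will verify the three hypotheses of Lemma~\ref{semiart} for $N$.
\begin{enumerate}
\item[(i)] $N$ is semi-Artinian. Fix $J^n=(0)$. The chain $N\supseteq NJ\supseteq\cdots\supseteq NJ^n=(0)$ has semi-simple factors $NJ^k/NJ^{k+1}$, since each is a module over the semi-simple ring $R/J$. Every factor of $N$ inherits such a finite chain, so every nonzero factor of $N$ has a nonzero socle.
\item[(ii)] $\Soc(N)$ is finitely generated. We have $\Soc(N)=N\cap\Soc(M)$, a direct summand of the finitely generated semi-simple module $\Soc(M)$.
\item[(iii)] $NJ^n=(0)$. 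This is immediate from $J^n=(0)$.
\end{enumerate}
Lemma~\ref{semiart} then says that $N$ is Bassian. Since $N$ was arbitrary, $M$ is hereditarily Bassian.

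There is no serious obstacle, since the hard work is already done in Lemma~\ref{semiart}. The only point needing care is making sure the hypotheses match. I must confirm that "semi-local with nilpotent radical" gives semi-perfectness, which is needed for Lemma~\ref{semi-perfect}. I must also confirm that semi-Artinianness holds for every submodule, not just for $M$; the finite $J$-adic chain in (i) settles this uniformly.
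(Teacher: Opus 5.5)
Your proposal is correct and follows the paper's proof: ($\Rightarrow$) comes from Lemma~\ref{semi-perfect}, and ($\Leftarrow$) applies Lemma~\ref{semiart} to each submodule $N$, using that $\Soc(N)$ is finitely generated and $NJ^n=(0)$. Your explicit checks that $R$ is semi-perfect (indeed semi-primary) and that every submodule is semi-Artinian fill in details the paper leaves implicit.
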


\begin{proof} $(\Rightarrow)$. It follows directly from Lemma~\ref{semi-perfect}.
	
$(\Leftarrow)$. If $\Soc(M)$ is finitely generated, then the socle of every submodule $N$ of $M$ is finitely generated. Furthermore, there exists $n$ such that $J^n=(0)$ for the Jacobson radical of $R$, whence $NJ^n=N0=(0)$. Now, it remains to employ Lemma~\ref{semiart} getting the desired conclusion.
\end{proof}

Note that the hypothesis of the previous theorem is satisfied by every one-sided Artinian ring. The following example unambiguously shows that the condition (2) of Proposition~\ref{qF} is {\it not} an equivalent condition of hereditarily Bassian modules over right Artinian rings.

\begin{exm} Let $$R=\begin{pmatrix} \Q&\R\\0&\R\end{pmatrix} ~ \text{and} ~ M=\begin{pmatrix} \R&\R\\ \R&\R\end{pmatrix}.$$ Then, a simple inspection demonstrates that $R$ is a right Artinian ring (which is not left Artinian), $M_R$ has a natural structure of a right $R$-module, and $$\Soc(M_R)=\begin{pmatrix} 0&\R\\ 0&\R\end{pmatrix}$$ is a finitely generated $R$-module. Thus, $M_R$ is an infinitely generated module which is hereditarily Bassian viewing Theorem~\ref{artinian}.

So, there are right Artinian rings $R$ over which the injective envelope $E(R_R)=E(\Soc(R_R))$ is definitely not finitely generated.
\end{exm}

Similarly, as in the case of super Bassian modules, we now characterize hereditarily Bassian modules over non-primitive Dedekind prime rings.

\begin{lm}\label{hereditarily} Let $F$ and $N$ be submodules of a module $M$ over a right hereditary right Noetherian ring $R$. If $F$ is finitely generated and $M/F$ is singular, then there exists a finitely generated submodule $G$ of $N$ such that $N/G$ is singular.
\end{lm}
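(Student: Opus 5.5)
We intersect: $N\cap F$ is a submodule of the finitely generated module $F$ over the right Noetherian ring $R$, hence finitely generated. The quotient $N/(N\cap F)\cong (N+F)/F$ embeds into $M/F$, so it is singular. Thus the natural candidate is $G:=N\cap F$. This is immediate and needs neither heredity nor Lemma~\ref{sing}. The only thing to verify is that a submodule of a singular module is singular, which holds directly from the definition: every element has essential right annihilator.

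If one prefers a version producing a non-singular $G$, in the spirit of Lemma~\ref{f.g.submodule}, we would refine this choice. First, use that $R$ is right hereditary and right Noetherian, hence right non-singular (right hereditary rings are right non-singular). Then apply Lemma~\ref{sing}(3), (a)$\Rightarrow$(c), to $N$ and its submodule $B=G$. This yields $H\le G$ with $H\cap\Sing(N)=(0)$ and $N/H$ singular. Since $H\le G$ lies in a Noetherian module, $H$ is finitely generated.

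There is no real obstacle here. The only point needing care is that "finitely generated" passes to $N\cap F$, which requires $F$ to be Noetherian, i.e.\ that $R$ is right Noetherian. The singularity of $N/G$ is inherited through the embedding $N/(N\cap F)\hookrightarrow M/F$.
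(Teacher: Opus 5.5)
Your proof is correct, and it takes a genuinely more direct route than the paper's. You set $G=N\cap F$. It is finitely generated because $F$ is Noetherian, and $N/G\cong (N+F)/F\le M/F$ is singular because singularity passes to submodules.

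The paper instead works modulo $S=\Sing(M)$:
\begin{itemize}
\item From Lemma~\ref{sing}(3) it gets $F+S\trianglelefteq M$, and then $(F+S)/S\trianglelefteq M/S$. This step uses that $M/S$ is non-singular, i.e.\ right non-singularity of $R$, which is where heredity enters.
\item It intersects with $(N+S)/S$ to get an essential finitely generated piece, and lifts generators to a finitely generated $G\le N$ with $G+S=(N+S)\cap(F+S)$.
\item The modular law gives $G+\Sing(N)\trianglelefteq N$, and Lemma~\ref{sing}(3), (b)$\Rightarrow$(a), then shows $N/G$ is singular.
\end{itemize}
The paper's $G$ has no extra property used later: Theorem~\ref{hereditarily Bassian} only needs some finitely generated $G\le N$ with $N/G$ singular. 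So your route loses nothing. What it gains is that the right hereditary hypothesis is unnecessary for this lemma; only Noetherianity of $F$ is used.

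Your optional refinement to a non-singular finitely generated $G$ via Lemma~\ref{sing}(3), (a)$\Rightarrow$(c), is also valid, since right hereditary rings are right non-singular. It is the only point where heredity would actually be needed, and the statement does not require it.
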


\begin{proof} Firstly, denote by $S$ the singular submodule of $M$ and observe that $\Sing(N)=S\cap N$. Since $M/F$ is singular, $F+S\trianglelefteq M$ by Lemma~\ref{sing}(3). As $M/S$ is non-singular and $M/(F+S)$ is singular, one checks that $(F+S)/S\trianglelefteq M/S$ again by Lemma~\ref{sing}(3). Thus,
\[
((N+S)/S)\cap ((F+S)/S) \trianglelefteq (N+S)/S.
\]
Clearly, $(F+S)/S$ is Noetherian, hence the module $((N+S)/S)\cap ((F+S)/S)$ is finitely generated. This guarantees existence of a finitely generated submodule $G$ of $N$ such that $$G+S=(N+S)\cap (F+S),$$ which is an essential submodule of the module $N+S$. Finally, $$G+(S\cap N)=(G+S)\cap N\trianglelefteq N$$ via the modular law, and $$N/(G+(S\cap N))= N/(G+\Sing(N))$$ is singular. Therefore, Lemma~\ref{sing} informs us that $N/G$ is singular as well, finishing the argumentation.
\end{proof}

We now have at our disposal all the machinery needed to prove the following equivalencies.

\begin{thm}\label{hereditarily Bassian}
The following items are equivalent for a module $M$ over a non-primitive Dedekind prime ring:
\begin{enumerate}
\item[(1)] $M$ is hereditarily Bassian.
\item[(2)] Each primary component of $\Sing(M)$ is Noetherian and there exists a finitely generated non-singular submodule $G$ of $M$ such that $M/G$ is singular.
\item[(3)] Each primary component of $\Sing(M)$ is finitely generated and there exists a finitely generated submodule $G$ of $M$ such that $M/G$ is singular.
\end{enumerate}
\end{thm}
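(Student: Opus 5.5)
I plan to prove the cycle (1) $\Rightarrow$ (2) $\Rightarrow$ (3) $\Rightarrow$ (1). Throughout, I use Tuganbaev's characterization \cite[Theorem 3]{T}: a module $X$ over a non-primitive Dedekind prime ring is Bassian if and only if $X/\Sing(X)$ has finite Goldie dimension and every primary component of $\Sing(X)$ is Noetherian. I also use \cite[Theorem 1]{T}, the singular case, which requires Noetherian primary components.

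\emph{(1) $\Rightarrow$ (2).} Since $M$ is in particular Bassian, every primary component of $\Sing(M)$ is Noetherian by \cite[Theorem 3]{T}. For the second part I copy the argument of Lemma~\ref{f.g.submodule}, but use a submodule instead of a quotient. Put $S:=\Sing(M)$. By Lemma~\ref{sing}(3), choose $B\le M$ with $B\cap S=(0)$ and $M/B$ singular; then $B$ is non-singular.

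Suppose $B$ has infinite Goldie dimension. Then \cite[Lemma 4.3]{T} gives a free submodule $F\cong R^{(\omega)}$ of $B$. Now $R^{(\omega)}\cong R^{(\omega)}\oplus R^{(\omega)}$ embeds into a proper quotient of itself, namely kill one copy. So $F$ is a non-Bassian submodule of $M$, which is a contradiction. (Alternatively, $F/F\Sing(\cdot)$ has infinite Goldie dimension, again contradicting \cite[Theorem 3]{T}.)

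Hence $B$ is finite dimensional, so it contains a finitely generated essential submodule $G$. This $G$ is non-singular. Moreover $G+S\trianglelefteq B+S\trianglelefteq M$, so $M/G$ is singular by Lemma~\ref{sing}.

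\emph{(2) $\Rightarrow$ (3).} This is immediate, since Noetherian modules are finitely generated.

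\emph{(3) $\Rightarrow$ (1).} Let $N\le M$ be arbitrary. I verify the two conditions of \cite[Theorem 3]{T} for $N$.

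First, by Lemma~\ref{hereditarily}, there is a finitely generated $G'\le N$ with $N/G'$ singular. The ring is hereditary Noetherian, so this lemma applies. By Lemma~\ref{sing}(3) this gives $G'+\Sing(N)\trianglelefteq N$. Passing to $N/\Sing(N)$, which is non-singular by \cite[Proposition 1.1]{T}, the image of $G'$ is essential there, as in the proof of Proposition~\ref{HNP}. Since that image is finitely generated, hence Noetherian, $N/\Sing(N)$ has finite Goldie dimension.

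Second, $\Sing(N)=\Sing(M)\cap N$. Hence the $P$-primary component of $\Sing(N)$ is a submodule of the $P$-primary component of $\Sing(M)$. The latter is finitely generated over a Noetherian ring, hence Noetherian, so its submodule is Noetherian as well. Then \cite[Theorem 3]{T} shows that $N$ is Bassian.

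The main obstacle I expect is the step in (1) $\Rightarrow$ (2) where infinite Goldie dimension must be excluded. I must make sure the infinite-rank free submodule really exists inside $B$ via \cite[Lemma 4.3]{T}, and that $R^{(\omega)}$ is genuinely non-Bassian. The latter holds because of the projection $R^{(\omega)}\oplus R^{(\omega)}\to R^{(\omega)}$ combined with an isomorphism onto the first summand. Everything else is bookkeeping with Lemma~\ref{sing}.
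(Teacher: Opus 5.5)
Your proof is correct and follows essentially the same route as the paper. For (1)$\Rightarrow$(2) you exclude infinite Goldie dimension via an infinite-rank free submodule, and you spell out why $R^{(\omega)}$ is not Bassian, which the paper leaves implicit. For (3)$\Rightarrow$(1) you combine Lemma~\ref{hereditarily} with $\Sing(N)=\Sing(M)\cap N$ and feed the result into \cite[Theorem 3]{T}, checking the criterion for each $N$ directly rather than first reducing to $M$.

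One small caution: the paper uses \cite[Theorem 3]{T} only as a sufficient condition. So in (1)$\Rightarrow$(2) it is safer to get the Noetherian primary components by applying \cite[Theorem 1]{T} to the singular submodule $\Sing(M)$, which is Bassian by hypothesis, exactly as the paper does.
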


\begin{proof} Put $S:=\Sing(M)$.

(1) $\Rightarrow$ (2). Since $S$ is Bassian, each primary component of $S$ is Noetherian in accordance with \cite[Theorem 1]{T}. However, from Lemma~\ref{sing} follows the existence of a submodule $H$ of $M$ such that $H\cap S=(0)$ and $M/H$ is singular. Evidently, $H$ is non-singular. Using the same argument as in the proof of Lemma~\ref{f.g.submodule} and assuming $H$ has infinite dimension, we obtain a free submodule $F$ of $H$ which must have infinite rank bearing in mind \cite[Lemma 4.3]{T}. This, in turn, contradicts the hypothesis that all submodules of $M$ are Bassian. We thus have shown that $H$ has infinite dimension, hence it contains a finitely generated essential submodule $G$. Now, Lemma~\ref{sing} suggests that $M/G$ is singular, because $$G\oplus S \trianglelefteq H\oplus S \trianglelefteq M.$$

(2) $\Rightarrow$ (3). This implication is straightforward.

(3) $\Rightarrow$ (1). First, observe that every submodule of $M$ also satisfies condition (3). Let $N$ be a submodule of $M$, and $P$ a maximal invertible ideal, then $\Sing(N)=S\cap N$, whence the $P$-primary component of $\Sing(N)$ is a submodule of the $P$-primary component of $S$. As $R$ is Noetherian, $S(P)$, and so $\Sing(N)(P)$, is Noetherian as well. The existence of finitely generated submodule $H$ of $N$ such that $N/H$ is singular follows now from Lemma~\ref{hereditarily}.

Resuming, we have just shown that it suffices to prove, without loss of generality, that $M$ is Bassian by application of \cite[Theorem 3]{T}. In fact, we have already noted that each primary component of $S$ is Noetherian. Since $M/G$ is singular, we get $G+S\trianglelefteq M$, and so $(G+S)/S\trianglelefteq M/S$ in virtue of Lemma~\ref{sing}. Furthermore, as $(G+S)/S$ is finitely generated, the quotient $M/S$ has finite dimension. We thus have verified the hypothesis of \cite[Theorem 3]{T}, implying that $M$ is Bassian, and we are set.
\end{proof}

As two subsequent consequences, we deduce:

\begin{cor}\label{cor-Dedekind} Let $M$ be a module over a non-primitive Dedekind prime ring.
\begin{enumerate}
\item[(1)] If $M$ is super Bassian, then $M$ is hereditarily Bassian.
\item[(2)] If $M$ is singular, the following conditions are equivalent:
\begin{enumerate}
\item[(a)] $M$ is hereditarily Bassian.
\item[(b)] $M$ is super Bassian.
\item[(c)] $M$ is Bassian.
\end{enumerate}
 \item[(3)] A non-singular module $M$ is hereditarily Bassian if, and only if, $M$ is of finite Goldie dimension.
\end{enumerate}
\end{cor}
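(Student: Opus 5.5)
We derive all three parts from Theorem~\ref{super Bassian}, Theorem~\ref{hereditarily Bassian}, Corollary~\ref{sB}, and \cite[Theorem 1]{T}. Throughout we use $\Sing(M/H)\supseteq (S+H)/H$ with $S=\Sing(M)$, and the fact that for a non-singular finitely generated $H$ the composite $S\to M\to M/H$ is injective because $S\cap H=\Sing(H)=(0)$.

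For (1), assume $M$ is super Bassian. Theorem~\ref{super Bassian}(2) gives a finitely generated non-singular $H\le M$ with $M/H$ singular and all primary components of $M/H$ Noetherian. Since $S\cap H=(0)$, $S$ embeds into $M/H$. Primary components are functorial: the $P$-primary component of a submodule is the intersection of that submodule with the $P$-primary component of the ambient module (\cite[Remark 8]{T}). Hence $S(P)$ embeds into $(M/H)(P)$, which is Noetherian, so $S(P)$ is Noetherian. Together with the existence of $H$ itself, this is exactly condition (2) of Theorem~\ref{hereditarily Bassian}, so $M$ is hereditarily Bassian.

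For (2), let $M$ be singular, so $S=M$. Then (b)$\Leftrightarrow$(c) is Corollary~\ref{sB}(1), and (b)$\Rightarrow$(a) is part (1) above. For (a)$\Rightarrow$(c), note that $M$ is a submodule of itself; if the definition only requires proper submodules to be Bassian, we instead use Theorem~\ref{hereditarily Bassian}(2), which says every primary component of $\Sing(M)=M$ is Noetherian, and then invoke \cite[Theorem 1]{T} (equivalently Corollary~\ref{sB}(1)(b)) to conclude that $M$ is Bassian.

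For (3), let $M$ be non-singular, so $S=0$ and the primary-component condition in Theorem~\ref{hereditarily Bassian}(3) holds vacuously. It therefore suffices to show that $M$ has finite Goldie dimension if and only if some finitely generated $G\le M$ has $M/G$ singular. If $M/G$ is singular, Lemma~\ref{sing}(3) gives $G=G+S\trianglelefteq M$. Since $G$ is finitely generated over a Noetherian ring, it is Noetherian and so has finite Goldie dimension, and therefore so does $M$. Conversely, if $M$ has finite Goldie dimension, choose independent uniform cyclic submodules $x_1R,\dots,x_nR$ whose sum $G$ is essential in $M$. Then $M/G$ is singular by Lemma~\ref{sing}(1), and Theorem~\ref{hereditarily Bassian} shows $M$ is hereditarily Bassian.

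The main obstacle is in (1): making sure that the primary components of $S$ really embed into those of $M/H$. This relies on the $P$-primary component being the largest $P$-primary submodule (equivalently, on the singular module decomposing as a direct sum of its primary components as in \cite{T}), so that any monomorphism preserves it.
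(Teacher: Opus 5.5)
Your proposal is correct and follows essentially the same approach as the paper. Part (1) is exactly the paper's argument: take the finitely generated non-singular $H$ from Theorem~\ref{super Bassian}, embed $\Sing(M)$ into $M/H$ primary-component-wise, and apply Theorem~\ref{hereditarily Bassian}. Part (2) combines Corollary~\ref{sB}(1) with Theorem~\ref{hereditarily Bassian} as the paper does, and for part (3), which the paper calls obvious, you supply valid details via Theorem~\ref{hereditarily Bassian} and the correspondence between finitely generated essential submodules and finite Goldie dimension.
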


\begin{proof} (1) Let $M$ be super Bassian and set $S:=\Sing(M)$. Then, Theorem~\ref{super Bassian} insures that there exists a finitely generated submodule $H$ of $M$ such that $M/H$ is singular, $H\cap S=(0)$, and each primary component of $M/H$ is Noetherian. Since $S$ is embeddable into $M/H$, each primary component of $S$ is Noetherian as well, and hence $M$ is hereditarily Bassian in view of Theorem~\ref{hereditarily Bassian}.

(2) It follows from combination of Corollary~\ref{sB}(1) and Theorem~\ref{hereditarily Bassian}.

(3) It is obvious.
\end{proof}

The following assertion reformulates Theorem~\ref{hereditarily Bassian} in the commutative case.

\begin{cor} A module $M$ over a Dedekind domain is hereditarily Bassian if, and only if, all primary components of $M$ are finitely generated and there exists a finitely generated torsion-free submodule $F$ of $M$ such that $M/F$ is torsion.
\end{cor}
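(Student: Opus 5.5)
This corollary is a direct specialization of Theorem~\ref{hereditarily Bassian} to a Dedekind domain $R$. The plan is to translate each term of condition (3) of that theorem into commutative language and to check that the two formulations match.

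\textbf{The dictionary.} First, the hypotheses of the theorem must hold. A Dedekind domain that is not a field is a commutative HNP ring in which every nonzero ideal is invertible. It is not primitive, since a commutative primitive ring is a field. If $R$ is a field, every module is a vector space and the statement reduces to finite dimensionality, which follows from Theorem~\ref{artinian}; so we may assume $R$ is not a field.

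Over a commutative domain, $\Sing(M)=\tau(M)$: an element is singular exactly when its annihilator is a nonzero ideal, that is, when it is torsion. Hence ``$M/G$ singular'' means ``$M/G$ torsion,'' and ``non-singular'' means ``torsion-free.'' The maximal invertible ideals are the nonzero prime ideals $P$. The $P$-primary components of $\Sing(M)$ are the primary components $\tau(M)_P$ of the torsion part. This is what ``primary components of $M$'' refers to in the statement.

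\textbf{The two implications.} Assume $M$ is hereditarily Bassian. Condition (2) of Theorem~\ref{hereditarily Bassian} gives a finitely generated non-singular, hence torsion-free, submodule $F=G$ with $M/F$ torsion. It also gives Noetherian, hence finitely generated, primary components of $\tau(M)$.

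Conversely, assume the primary components of $\tau(M)$ are finitely generated and $F$ is a finitely generated torsion-free submodule with $M/F$ torsion. These are exactly the hypotheses of condition (3) of the theorem, so $M$ is hereditarily Bassian. Note that torsion-freeness of $F$ is not needed in this direction.

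\textbf{The only delicate point.} The argument depends on identifying $\Sing$ with $\tau$, and the primary components of \cite{T} with the usual $P$-primary torsion components. This identification is standard for commutative domains, so no step should cause real difficulty.
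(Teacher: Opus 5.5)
Your proposal is correct and follows the paper's own route: the paper states this corollary without proof as a reformulation of Theorem~\ref{hereditarily Bassian} in the commutative case, and your dictionary ($\Sing(M)=\tau(M)$, non-singular $=$ torsion-free, maximal invertible ideals $=$ nonzero primes, primary components of $M$ $=$ those of $\tau(M)$) is exactly what that reformulation needs. Your separate treatment of the case where $R$ is a field is a small point the paper passes over silently: a field is primitive, so Theorem~\ref{hereditarily Bassian} does not apply, but Theorem~\ref{artinian} covers it.
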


In case of Abelian groups, one can say that {\it a (reduced) group is Bassian if, and only if, it is hereditarily Bassian} (compare with \cite{DG} and \cite{K}).

\medskip

We now finish this section with two examples which illustrates that the notions of super Bassian and hereditarily Bassian modules are independent one another, thus motivating a serious further research examination in order to obtain their complete structural characterization.

\begin{exm} The Abelian group $\Q$ of rank 1 is hereditarily Bassian looking at Corollary~\ref{cor-Dedekind}(3), and since it is well-known that $\Q/\Z\cong \bigoplus_p\Z_{p^\infty}$, it follows at once that it need not be super Bassian adapting Corollary~\ref{sBcom}.
\end{exm}

\begin{exm} Let $K$ be field, $V$ a vector space over $K$ of infinite dimension, and denote by $R$ a trivial extension of $K$ by $V$, that is, $R/J(R)\cong K$, $J(R)\cong V$ and $J(R)^2=(0)$. Then, $R$ is a local perfect ring such that the module $R_R$ is super Bassian (and so Hopfian), but $R_R$ is not hereditarily Bassian as $J(R)$ is not Bassian.
\end{exm}

In closing, we pose the following two challenging questions that could be of some interest and importance for a widely audience of researchers (see cf. \cite{K}).

\begin{prob} Describe the structure of hereditarily and super Bassian modules over larger classes of (possibly non-commutative) rings and domains.
\end{prob}

In regard to \cite{B} and \cite{RS}, the following seems to make sense.

\begin{prob} Describe the structure of hereditary and super Bassian rings.
\end{prob}

\medskip
\medskip

\section*{Data availability}
No data was used for the research described in the article.

\medskip
\medskip

\section*{Declarations}
The authors declare no any conflict of interests while writing and preparing this manuscript.


\vskip2.0pc

\begin{thebibliography}{99}

\bibitem{B} N. Blacher, {\it Rings whose subrings are all Noetherian or Artinian}, J. Algebra {\bf 693} (2026), 362--371.

\bibitem{CDG} A. R. Cheklov, P. V. Danchev and B. Goldsmith, {\it On the Bassian property for Abelian groups}, Arch. Math. (Basel) {\bf 117}(6) (2021), 593--600.

\bibitem{DG} P. Danchev and B. Goldsmith, {\it Super Bassian and nearly generalized Bassian Abelian groups}, Internat. J. Algebra Comput. {\bf 34}(5) (2024), 639--653.

\bibitem{DKTZ} S. Das, M. T. Ko\c{s}an, O. Ta\c{s}demir and J. \v{Z}emli\v{c}ka, {\it On Bassian and generalized Bassian modules}, J. Algebra Appl. {\bf 25} (2026).

\bibitem{K} P. W. Keef, {\it Sub-Bassian properties on Abelian groups}, Commun. Algebra {\bf 53}(4) (2025), 1723--1738.

\bibitem{RS} L. Rowen and L. Small, {\it Hopfian and Bassian algebras}, arXiv:1711.06483v2.

\bibitem{T} A. Tuganbaev, {\it On Bassian Modules}, Mathematics MDPI {\bf 14} (2026).
\\ https://doi.org/10.3390/math14010108

\end{thebibliography}
\end{document}